\documentclass[11pt]{article}
\usepackage{amsmath,amsfonts,amssymb}
\usepackage{a4wide, amsthm}
\usepackage[english]{babel}
\usepackage[babel=true]{csquotes}
\usepackage{textcomp}

\usepackage{graphicx}
\usepackage[usenames,dvipsnames]{color}
\usepackage[colorlinks=true, pdfstartview=FitV, linkcolor=black, citecolor=blue, urlcolor=blue]{hyperref}
\def\Xint#1{\mathchoice
{\XXint\displaystyle\textstyle{#1}}%
{\XXint\textstyle\scriptstyle{#1}}%
{\XXint\scriptstyle\scriptscriptstyle{#1}}%
{\XXint\scriptscriptstyle\scriptscriptstyle{#1}}%
\!\int}
\def\XXint#1#2#3{{\setbox0=\hbox{$#1{#2#3}{\int}$ }
\vcenter{\hbox{$#2#3$ }}\kern-.6\wd0}}

\def\dashint{\Xint-}
\usepackage{authblk}

\usepackage{hyperref} 
\usepackage{mathrsfs}
\renewcommand{\div}{\operatorname{div}}
{\newtheorem{thm}{Theorem}[section]}
{\newtheorem{prop}[thm]{Proposition}}
{\newtheorem{coro}[thm]{Corollary}}
{\newtheorem{lemme}[thm]{Lemma}}
{}
{\newtheorem{rem}{Remark}[section]}

\newcommand{\supp}{\text{supp }}

\newcommand{\R}{\mathbb{R}}
\newcommand{\T}{\mathbb{T}}
\newcommand{\N}{\mathbb{N}}

\newcommand{\pa}{{\partial}}
\newcommand{\na}{{\nabla}}

\newcommand{\eps}{{\varepsilon}}

\def\div{\hbox{div \!}}

\title{Analysis of a sedimenting suspension  \\ near a vertical wall }
\author[1]{David Gérard-Varet \footnote{david.gerard-varet@imj-prg.fr}}
\author[1]{Amina Mecherbet\footnote{mecherbet@imj-prg.fr}}
\affil[1]{Université Paris Cité and Sorbonne Université, CNRS, IMJ-PRG, F-75013 Paris, France.}
\begin{document}
\maketitle

\begin{abstract}
We consider a sedimenting suspension in a Stokes flow, in the presence of a vertical wall. We study the effect of a particle-depleted fluid layer near the wall on the bulk dynamics of the suspension. We show that this effect can be captured by an appropriate wall law of Navier type. We provide in this way a rigorous justification of the apparent slip observed in many experiments. We also discuss the phenomenon of intrinsic convection predicted in some physics articles. 
\end{abstract}

\section{Introduction}
The mathematical analysis of  suspensions of small particles sedimenting in a Stokes flow has been the subject of extensive research over the last years. In the context of inertialess particles, a classical model is the following. Given a finite or infinite collection of identical non-overlapping spherical particles $(B_i = B(X_i,R))_{i \in I}$, set in a bounded or unbounded domain $\Omega$, one considers the system 
\begin{equation} \label{stokes_equation}
\begin{aligned}
-\mu \Delta u + \na p & = f, \quad \text{ in } \: \Omega \setminus(\cup_i B_i) \\
\div u & = 0,  \quad \text{ in } \: \Omega \setminus(\cup_i B_i) \\
 u\vert_{\pa \Omega} &  = 0 \\
D(u) & = 0 \quad \text{ in }  B_i,  \quad \forall i \\
\int_{\pa B_i} \sigma_\mu(u,p) n & =  -m g e, \quad \int_{\pa B_i} \sigma_\mu(u,p) n \times (X-X_i)= 0, \quad  \forall i  
\end{aligned}
\end{equation}
The unknowns of this system are the velocity field and pressure:
$$u(x) =  (u_1,u_2,u_3)(x_1,x_2,x_3), \quad p(x) = p(x_1,x_2,x_3) $$
The first three lines describe a Stokes flow with viscosity $\mu$ outside the particles, with the usual no-slip condition at $\pa \Omega$. The fourth one expresses rigidity of the velocity field of each particle $B_i$, and is equivalent to 
$$ u = u_i + \omega_i \times (x-X_i) \quad \text{in } B_i$$
for some unknown translational and angular velocities $u_i, \omega_i \in \R^3$.The two relations in the last line, correspond to balances of forces and torques, and involve the Newtonian stress tensor $\sigma_\mu(u,p) = 2\mu D(u) - p I$. The constants $u_i, \omega_i$ can be seen as Lagrange multipliers associated with these constraints. The term $-mge$, $e = (0,0,1)^t$ encodes gravity and buoyancy effects: denoting $\rho_p$ and $\rho_f$ the particle and fluid masses per unit volume, one has $m = \frac{4}{3} \pi R^3 (\rho_p - \rho_f)$, while $g$ is the usual gravitational constant. This is the term responsible for sedimentation. 

Roughly, all mathematical studies of this model divide into two categories
\begin{itemize}
    \item those that focus on steady properties of the suspension. Time evolution of $X_i = X_i(t)$ is neglected (or frozen), the particle distribution is considered as given, and mean properties of the suspension, such as the average settling velocity \cite{HillairetHoefer23, Hasimoto59, Batchelor72Sedimentation, DuerinckxGloria22} or the effective viscosity  \cite{HainesMazzucato12,NiethammerSchubert19, Gerard-Varet19, Gerard-VaretHillairet19, Gerard-VaretHoefer21, Gerard-VaretMecherbet20,DuerinckxGloria19}, are deduced. The derivation of the mean settling velocity, and its expansion in terms of the solid volume fraction are notoriously difficult, and rely on subtle cancellation mechanisms which require strong assumptions on the  distribution of the particles (either periodic like or random stationary with strong mixing properties).   
\item those that analyse the dynamics of the suspension (for a large but finite $N \gg 1$ collection of balls $B_1, \dots, B_N$): the time dependence of $X_i = X_i(t)$ is taken into account through coupling of \eqref{stokes_equation} with the ODE 
\begin{equation}
\dot{X}_i = u_i 
\end{equation}
In the dilute case, under appropriate lower bound on the minimal distance between the particles, a mean field analysis allows to derive a transport-Stokes equation on $\rho = \lim_{N\rightarrow \infty} \frac{1}{N} \sum_{i=1}^N \delta_{X_i}$. We refer to \cite{Hofer18MeanField,Mecherbet19,Hofer&Schubert,HoferSchubert23} for all details. 
\end{itemize}
Except for the work \cite{HillairetHoefer23} that contains a brief discussion of boundary effects, all mathematical references above consider unbounded domains $\Omega = \R^3$ or $\Omega = \T^3$. On the other hand, the impact of the boundaries of a container on the nature of the sedimentation process has been widely discussed in the physics community.  One of the most striking examples of such impact is the so-called Boycott effect \cite{Boy20}: the settling speed  of a suspension is much faster in an inclined channel than in a vertical one, as the slurry of particles accumulating on the upward-facing wall slides faster to the bottom, creating a counterflow in the clear fluid layer above. 
Even in a vertical channel, the role of boundaries has been emphasized for decades. One main reason for this boundary effect is the formation of a so-called depletion layer near the wall, where the concentration of the suspension is much less than in the bulk of the suspension, and where large velocity gradients occur. This depletion may be due to excluded volume effect: for instance, for spherical particles, one has the constraint $d(X_i, \pa \Omega) > R$, which results in a poorer concentration in a layer of thickness $\approx R$. But additional mechanisms, such as shear induced migration, may enhance the depletion mechanism and much enlarge the layer \cite{SegSil,SegSil2,Ho_Leal_1974,Ausserre}, notably for suspensions of polymers. 
In relation to this depletion layer, two phenomena are much discussed in the literature: 
\begin{itemize}
    \item The first one, observed experimentally, is  {\em apparent slip}: for suspensions flowing through a channel, one observes non-zero downward velocities just outside the depletion layer, that can be interpreted as additional slip : see \cite{Abbasi,Koponen,Ghosh} and references therein. 
    \item The second one, not observed experimentally but predicted in several studies \cite{GeiMaz,BFAH96,BFBA98}, is called {\em intrinsic convection}:  homogeneous suspensions are expected to experience an additional downward flow in the bulk, while  experiencing  upward counterflows just outside the depletion layers (resulting in zero additional net flux). 
\end{itemize}
At first sight, these two phenomena may seem contradictory (downward {\it vs} upward flow just outside the depletion layers).  One ambition of this paper is to clarify these different behaviours. More globally, we wish here to analyze rigorously the effect of a depletion layer on a suspension, starting from the simplest model \eqref{stokes_equation}. For $\eps > 0$ we consider the half-space $\Omega = \Omega^\eps = (-\eps,+\infty) \times \R^2$, where the depletion layer $D_\eps =  (-\eps,0) \times \R^2$, of width $\eps$, is free of particles centers. Namely, 
\begin{equation} \label{eq_assumption}
X_1, \dots, X_N \in K \Subset \overline{\Omega^0}, \quad \Omega^0  = \R^*_+ \times \R^2. 
\end{equation}
Taking 
$$L = |K|^{1/3}, \quad U = \frac{mgN}{\mu L}, \quad P = \frac{\mu U}{L}$$
as reference length, velocity and pressure, we can put system \eqref{stokes_equation} in dimensionless form: 
\begin{align*}
&    X' = X/L, \quad \eps' = \eps/L, \quad R'= R/L \\
&    u' = u/U, \quad p' = p/P, \quad f' = \frac{L^2}{\mu U} f, \quad X'_i  = X_i/L, \quad B'_i = B_i/L = B(X'_i, R').  
\end{align*}
After dropping the primes, we end up with 
\begin{equation} \label{main_equation}
\begin{aligned}
-\Delta u + \na p & = f, \quad \text{ in } \: \Omega^\eps \setminus(\cup B_i) \\
\div u & = 0,  \quad \text{ in } \: \Omega  ^\eps\setminus(\cup B_i) \\
 u\vert_{\pa \Omega^\eps} &  = 0 \\
D(u) & = 0 \quad \text{ in }  B_i, \quad 1 \le i \le N \\
\int_{\pa B_i} \sigma(u,p) n & = -\frac{1}{N}e, \quad \int_{\pa B_i} \sigma(u,p) n \times (x-X_i)= 0,  \quad 1 \le i \le N  
\end{aligned}
\end{equation}
where $\sigma(u,p) = 2D(u) - p I$. Note that we could have considered some variation of \eqref{main_equation} with additional forcing $\int_{B_i} f$ and torque $\int_{B_i} f \times (x-X_i)$  included in the last two relations. 
We want to understand the asymptotic behaviour of the solution $u = u^{N,\eps}$ for large $N$ and small $\eps$. We make three main assumptions:
\begin{itemize}
    \item We denote by $\rho^N$ the empirical measure defined by 
    $
    \rho^N = \frac{1}{N} \sum_i \delta_{X_i} 
    $
% \begin{equation} \label{H0}
%  \rho^N = \frac{1}{N} \sum_i \delta_{X_i}  \rightarrow \rho, \quad \text{ as } N \rightarrow +\infty 
%  \end{equation}
and consider $\rho$ a bounded density compactly supported in $\overline{\Omega^0}$.
\item We assume  that there exists $\theta>1$ such that 
\begin{equation} \label{H1}
\epsilon> \theta R
 \end{equation}
which is a slight reinforcement of the non-overlapping constraint between the spheres and the wall. 
\item Denoting $d_{min} = \min_{i \neq j} |X_i - X_j|$,  we assume that  
\begin{equation}\label{hyp_dmin}
    d_{\min}\geq \max\left (\frac{C}{N^{1/3}},2\theta R \right)
\end{equation}
%{\color{red} on peut enlever le max si on suppose que $\phi=R^3N$ est d'ordre 1 et assez petit}
\end{itemize}
Under these assumptions, we will perform a two-step analysis of \eqref{main_equation}.
\paragraph{Step 1 : approximation by a continuous model.}
We will first show that the solution $u^{N,\eps}$ of \eqref{main_equation} can be approximated by the continuous solution $u^\eps$ of 

\begin{equation} \label{limit_equation}
\begin{aligned}
-\Delta u^\eps + \na p^\eps & = f -  \rho e, \quad \text{ in } \: \Omega^\eps  \\
\div u^\eps & = 0,  \quad \text{ in } \: \Omega^\eps \\
 u^\eps \vert_{\pa \Omega^\eps} &  = 0 
\end{aligned}
\end{equation}
We assume in all what follows that $f \in L^1(\Omega^\eps) \cap L^\infty(\Omega^\eps)$. In particular,  $f \in L^{6/5}(\Omega^\eps)$  so that \eqref{main_equation}, resp. \eqref{limit_equation}, has a unique solution $u^{N,\eps}$, resp. $u^\eps$ in
$$V = \{ u \in H^1_{loc}(\overline{\Omega^\eps}), \na u \in L^2(\Omega^\eps)\}$$
We will prove in Section \ref{sec_approx}:
\begin{thm} \label{thm1}
Let   $1<q<3/2$, $Q \Subset \overline{\Omega^\eps}$.  There exists  $C > 0$  depending on $q$, $Q$ and on the constant $\theta$ in \eqref{H1}, such that , 
\begin{equation} \label{first_estimate}
 \|u^{N,\eps} - u^\eps\|_{L^q(Q)} \le C \Big( \phi + \|\rho^N-\rho\|_{(W^{2,q'}(\Omega^0))^*}+R \Big) 
 \end{equation}
 where $\phi=RN^3$ the particles volume fraction.
\end{thm}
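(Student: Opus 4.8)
\medskip
\noindent\emph{Proof proposal.} The plan is to insert an intermediate object carrying the \emph{empirical} density. Let $U^\eps$ denote the solution operator of the Stokes--Dirichlet problem in $\Omega^\eps$ and $G^\eps(\cdot,y)$ the associated Green tensor, so that $u^\eps=U^\eps[f-\rho e]$, and set
\[ v^\eps:=U^\eps[f-\rho^N e]=U^\eps[f]-\tfrac1N\sum_i G^\eps(\cdot,X_i)\,e , \]
understood as a distributional solution (it has a $|x-X_i|^{-1}$ singularity at each center, hence lies in $W^{1,q}_{\loc}(\overline{\Omega^\eps})$ only for $q<3/2$, which is where that restriction enters). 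Then $u^{N,\eps}-u^\eps=(u^{N,\eps}-v^\eps)+(v^\eps-u^\eps)$, and I would estimate the two terms separately.

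\smallskip
\noindent\emph{The term $v^\eps-u^\eps$ (duality).} It solves the Stokes--Dirichlet problem with right-hand side $(\rho-\rho^N)e$, supported in the fixed compact $K\Subset\overline{\Omega^0}$, hence at distance $\ge\eps$ from $\pa\Omega^\eps$. For $\psi\in C^\infty_c(Q)$ let $\varphi=U^\eps[\psi]$; multiplying the equation for $v^\eps-u^\eps$ by $\varphi$ and integrating by parts (all boundary terms vanish since both fields are divergence free and vanish on $\pa\Omega^\eps$) gives $\int_{\Omega^\eps}(v^\eps-u^\eps)\cdot\psi=\langle(\rho-\rho^N)e,\varphi\rangle$. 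Interior and boundary $L^{q'}$ regularity for the Stokes system bounds $\|\varphi\|_{W^{2,q'}(\mO)}\le C\|\psi\|_{L^{q'}(\Omega^\eps)}$ on a neighbourhood $\mO$ of $K$ staying away from the wall (the lower-order contribution being absorbed by the a~priori bound in the half-space), and since $q'>3$ one has $W^{2,q'}\hookrightarrow C^1$, so the pairing against the measure $\rho-\rho^N$ makes sense. Hence $|\int(v^\eps-u^\eps)\cdot\psi|\le C\,\|\rho^N-\rho\|_{(W^{2,q'}(\Omega^0))^*}\,\|\psi\|_{L^{q'}}$, and taking the supremum over such $\psi$ yields $\|v^\eps-u^\eps\|_{L^q(Q)}\le C\,\|\rho^N-\rho\|_{(W^{2,q'}(\Omega^0))^*}$.

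\smallskip
\noindent\emph{The term $u^{N,\eps}-v^\eps$ (method of reflections).} For each $i$ let $\mathcal U_i$ be the exact solution of the one-particle resistance problem in $\Omega^\eps$: rigid motion of $B_i$, prescribed force $-e/N$, zero torque, no slip on $\pa\Omega^\eps$, decay at infinity. By \eqref{H1} the ball $B_i$ stays at distance $\gtrsim_\theta R$ from the wall, so the classical far-field expansion holds with constants depending only on $\theta$: for $x\notin B_i$, $\mathcal U_i(x)=-\tfrac1N G^\eps(x,X_i)e+\tfrac1N r_i(x)$ with $|r_i(x)|\lesssim_\theta R\,|x-X_i|^{-2}$. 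Put $u_{\mathrm{app}}:=U^\eps[f\,\mathbf 1_{\Omega^\eps\setminus\cup B_i}]+\sum_i\mathcal U_i$. On the one hand $u_{\mathrm{app}}-v^\eps=\tfrac1N\sum_i r_i+$ (a lower-order $f$-correction), and using \eqref{hyp_dmin} together with the boundedness of $\rho$ to compare $\tfrac1N\sum_j|x-X_j|^{-2}$ with the corresponding integral against $\rho$ (this is again where $q<3/2$ is used, to keep these singular kernels locally in $L^q$) one gets $\|u_{\mathrm{app}}-v^\eps\|_{L^q(Q)}\lesssim_\theta R$. On the other hand $u_{\mathrm{app}}$ solves the Stokes system of \eqref{main_equation} exactly outside $\cup_i B_i$ and carries the exact force and torque on each $\pa B_i$ (the contributions of $\mathcal U_j$, $j\neq i$, being Stokes-harmonic across $B_i$), but it is not rigid inside $B_i$: there its strain equals the strain generated by the other particles and by $f$, of size $O_\theta(1)$ on the small set $\cup_i B_i$. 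The corrector $w:=u^{N,\eps}-u_{\mathrm{app}}$ then solves a homogeneous Stokes problem whose only defect is this non-rigidity, and an energy estimate in the spirit of \cite{Hofer18MeanField,Mecherbet19} --- relying once more on \eqref{hyp_dmin} so that the pairwise reflected fields and the pressure contribution sum without loss in $N$ --- bounds $\|\na w\|_{L^2(\Omega^\eps)}$, and hence $\|w\|_{L^q(Q)}$, by $C\phi$. Collecting the three bounds gives \eqref{first_estimate}.

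\smallskip
\noindent\emph{Main obstacle.} The delicate part is the last step. First, the one-particle expansion and the estimate on $r_i$ must be uniform in the particle--wall distance, which is only $\gtrsim_\theta R$; this forces one to use the precise structure of the half-space Green tensor (its decay near $\pa\Omega^\eps$ and the image terms), and is the origin of the dependence of $C$ on $\theta$. Second, and more seriously, the energy estimate for $w$: the non-rigid strain of $u_{\mathrm{app}}$ inside each $B_i$ is $O(1)$, not small, so all the gain must come from the smallness of the occupied volume and from a careful summation of the reflected velocities and of the associated pressure using the separation \eqref{hyp_dmin}, without losing powers of $N$ in the near-field interactions between neighbouring particles. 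The duality estimate for $v^\eps-u^\eps$ is comparatively routine, the only mild care being the unboundedness of $\Omega^\eps$, dealt with since all the relevant data are compactly supported away from the wall.
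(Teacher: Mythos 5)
Your overall architecture is the one the paper uses: the same approximation $u_{\mathrm{app}}=\sum_i \mathcal{U}^{st}_R[-e/N](\cdot,X_i)+u_{f,N}$ built from the exact one-particle solutions in the half space, the same intermediate field $v^N=-\frac1N\sum_i G(\cdot,X_i)e+u_{f,N}$ carrying the empirical density, the far-field expansion of the half-space Stokeslet with an $O(R|x-X_i|^{-2})$ remainder (this is exactly Proposition \ref{prop:stokeslet}, item \ref{ref:it2}), and a duality estimate against the $W^{2,q'}$-regular dual Stokes solution for the $\|\rho^N-\rho\|_{(W^{2,q'}(\Omega^0))^*}$ term. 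The step $u_{\mathrm{app}}\to v^N$ via the pointwise remainder bound is a legitimate variant (the paper instead runs a duality argument using the energy bound \eqref{eq:energy}); you would only need a word about the region $B(X_i,\theta R)$ where the pointwise expansion is not available, but for $q<3/2$ the singular pieces are locally $L^q$ and the count gives $O(R)$, so this is minor.

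The genuine gap is in your last step, $w=u^{N,\eps}-u_{\mathrm{app}}$. The defect of $u_{\mathrm{app}}$ is a non-rigid strain of size $O(1)$ on $\cup_i B_i$, so the energy estimate $\|\nabla w\|_{L^2(\Omega^\eps)}\le C\|D u_{\mathrm{app}}\|_{L^2(\cup_i B_i)}$ only yields $O\big((NR^3)^{1/2}\big)=O(\phi^{1/2})$, not the $O(\phi)$ you claim; no refinement of the summation over particles can beat this, because the data is genuinely of size $\phi^{1/2}$ in $L^2$ and the energy estimate is sharp at that level. Since $\phi\to 0$, $\phi^{1/2}\gg\phi$, so your route proves a strictly weaker statement than \eqref{first_estimate}. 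The missing idea is to use duality for this step as well: pair $w$ with $\psi\in C^\infty_c(K)$, write $\int_K w\cdot\psi$ as sums of integrals over the balls $B_i$ and the annuli $A_i$ involving $Dw$, $Du_{\mathrm{app}}$ and the dual solution $u_\psi$ (after subtracting averages and using the divergence-free extension of Lemma \ref{lemme:extension}), and exploit that $\|\nabla u_\psi\|_{L^2(\cup_i B_i)}\lesssim \phi^{1/2}\|\nabla u_\psi\|_{L^\infty}\lesssim \phi^{1/2}\|\psi\|_{L^{q'}}$; the product of the two $\phi^{1/2}$ factors is what produces the $O(\phi)$ bound in $L^q(K)$. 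This is precisely the paper's first step, and without it your estimate stalls at $\phi^{1/2}$.
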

\begin{itemize}
\item The norm $\|\rho^N-\rho\|_{(W^{2,q'}(\Omega^0))^*}$ can be replaced by the first Wasserstein distance $W_1(\rho^N,\rho)$ between $\rho^N$ and $\rho$ and consequently by  $W_p(\rho^N,\rho)$ for any $p\in[1,+\infty]$ if one assumes $\rho \in L^1(\Omega^0)$.
    \item The control on the minimal distance \eqref{hyp_dmin} can be relaxed to $d_{\min}>2 \theta R $ together with any assumption ensuring a uniform bound on $$
\frac{1}{N} \underset{j \neq i}{\sum} \frac{1 }{|X_i-X_j|^2} \leq C,
$$
see Remark \ref{discussion_d_min} for more details.
\end{itemize}
In cases without boundaries, $\Omega = \R^3$ or $\Omega = \T^3$, this kind of statement is already known, see for instance \cite{Hofer&Schubert}. The proof is based there on an approximation of $u^{N,\eps}$ made of so-called point Stokeslets and point Stresslets, centered at the $X_i$'s. They involve the Oseen tensor, that is the fundamental solution of the Stokes operator in $\R^3$,  and its derivatives. They are not fitted to our half-space case, as the inhomogeneous Dirichlet data that they create at the boundary is too large. We must therefore adapt this strategy, constructing the analogue of point Stokeslets  for a half-space with homogeneous Dirichlet condition. We believe that this part of the analysis is of independent interest, and  contributes to the interesting problem of extending the method of reflections to domains with boundaries \cite{Hoefer19}. 

\paragraph{Step 2 : Wall law.}
Once the continuous model \eqref{limit_equation} has been derived, the goal is to understand the effect of the depletion layer $D^\eps$ on the solution $u^\eps$. In other words, one would like to improve the crude approximation  
\begin{equation} \label{crude_approx}
\begin{aligned}
-\Delta u^0 + \na p^0 &= f -  \rho e  , \quad \text{ in } \: \Omega  \\
\div u^0 & = 0,  \quad \text{ in } \: \Omega^0 \\
 u^0 \vert_{\pa \Omega^0} &  = 0 
\end{aligned}
\end{equation}
where the impact of the layer is totally neglected. In this regard, we will prove in Section \ref{sec_wall_law} the following theorem: 
\begin{thm} \label{thm2}
Assume that $f$ and $\rho$ are smooth, compactly supported in $\overline{\Omega^0}$. Then, for all $m \in \N$, one has  
$$ u^\eps = u^0 + \eps u^1 + \dots +  \eps^m u^m + O(\eps^{m+1}) \quad \text{ in } \: \dot{H}^1(\Omega^0)$$
where $u^0$ solves \eqref{crude_approx}, and for each $i \ge 1$, $u^i$ solves an homogeneous Stokes equation in $\Omega^0$ with inhomogeneous Dirichlet data at $\pa \Omega^0$ coming from lower order profiles.
\end{thm}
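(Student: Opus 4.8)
The plan is to construct the asymptotic expansion explicitly by matching, then justify it with an energy estimate on the remainder. Write $\Omega^\eps = D^\eps \cup \Omega^0$ with $D^\eps = (-\eps,0)\times\R^2$. Since $f$ and $\rho e$ are supported in $\overline{\Omega^0}$, the equation in $D^\eps$ is the homogeneous Stokes system with zero data on $\pa\Omega^\eps = \{x_1 = -\eps\}\times\R^2$. The idea is that the only role of the thin layer $D^\eps$ is to shift the effective location of the no-slip wall and to modify the boundary condition seen by the bulk flow. First I would set up the ansatz $u^\eps \approx \sum_{k\ge 0}\eps^k u^k$ in $\Omega^0$, plug into \eqref{limit_equation} restricted to $\Omega^0$, and collect powers of $\eps$: each $u^k$ solves $-\Delta u^k + \na p^k = (f-\rho e)\mathbf{1}_{k=0}$ in $\Omega^0$, i.e. the homogeneous Stokes equation for $k\ge 1$, with Dirichlet data on $\pa\Omega^0$ still to be determined. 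To find that data, I would solve the Stokes problem in the thin strip $D^\eps$ with the bulk trace on $\{x_1=0\}$ as one boundary datum and $0$ on $\{x_1=-\eps\}$, and expand the strip solution in $\eps$; this is a one-dimensional boundary-layer-type computation in the normal variable $x_1$, and Taylor-expanding the no-slip condition $u^\eps(-\eps,x') = 0$ around $x_1 = 0$ gives
\begin{equation*}
0 = u^\eps(0,x') - \eps\,\pa_1 u^\eps(0,x') + \tfrac{\eps^2}{2}\pa_1^2 u^\eps(0,x') - \cdots,
\end{equation*}
which, after inserting the ansatz and grouping orders, yields the Dirichlet data for $u^k$ on $\pa\Omega^0$ in terms of normal derivatives of $u^0,\dots,u^{k-1}$ — exactly the "data coming from lower order profiles" in the statement. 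At order $\eps$ one recovers the Navier-type wall law $u^1|_{\pa\Omega^0} = \pa_1 u^0|_{\pa\Omega^0}$ (componentwise, tangential part), consistent with the apparent-slip interpretation advertised in the introduction.

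The second main step is to define the remainder $r^{\eps,m} := u^\eps - \sum_{k=0}^m \eps^k E u^k$, where $E$ denotes a suitable extension of the profiles from $\Omega^0$ to all of $\Omega^\eps$ (extend $u^k$ into $D^\eps$ by, say, its Taylor polynomial in $x_1$ truncated so as to keep the Dirichlet condition on $\{x_1=-\eps\}$ correct up to $O(\eps^{m+1})$, or more cleanly by solving the strip problem exactly). Then $r^{\eps,m}$ solves a Stokes system in $\Omega^\eps$ with a forcing term and a boundary datum that are both $O(\eps^{m+1})$ in the appropriate norms — the forcing supported near $\pa\Omega^0$ of size $O(\eps^{m+1})$, the boundary residual on $\{x_1=-\eps\}$ likewise. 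A standard energy estimate for the Stokes equations in $V$ (using the Poincaré-type / Sobolev embedding already invoked for well-posedness in the excerpt) then gives $\|\na r^{\eps,m}\|_{L^2(\Omega^\eps)} \le C\eps^{m+1}$, hence $\|r^{\eps,m}\|_{\dot H^1(\Omega^0)} = O(\eps^{m+1})$, which is the claim. One has to be a little careful: since the statement is for every $m$, it suffices to prove the estimate at level $m$ assuming the expansion has been constructed up to level $m+1$, so no uniform-in-$m$ control is needed.

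I expect the main obstacle to be the bookkeeping at the interface $\{x_1 = 0\}$: one must verify that the profiles $u^k$, defined only on $\Omega^0$, have enough regularity and decay (smoothness of $f,\rho$ and elliptic regularity for the Stokes system in the half-space give this, but the half-space is unbounded, so decay of the boundary data and of $(f-\rho e)$, here guaranteed by compact support, must be tracked) for their normal traces $\pa_1^j u^k|_{\pa\Omega^0}$ to make sense as $H^{1/2}$-type data for the next profile, and that the extension operator $E$ does not destroy the divergence-free constraint — this last point typically requires a Bogovskii-type correction in the strip $D^\eps$, whose operator norm must be controlled in $\eps$ (it is, because the correction is localized in a layer of width $\eps$ and the divergence residual is itself $O(\eps^{m+1})$). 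Once regularity and the divergence correction are handled, assembling the energy estimate is routine.
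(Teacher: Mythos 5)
Your overall skeleton is the same as the paper's: a regular expansion in $\Omega^0$, a determination of the correctors' Dirichlet data from the thin strip, and an energy estimate for the remainder exploiting the $O(\eps)$ width of $D^\eps$ (Poincaré/trace with constants $\eps$, $\sqrt{\eps}$) together with a divergence-free correction of the extension. The gap is in the mechanism you propose for generating the boundary data. Taylor-expanding the no-slip condition $u^\eps(-\eps,x')=0$ around $x_1=0$ and inserting the bulk ansatz implicitly assumes that the solution (equivalently, the bulk profiles extended by their Taylor polynomials) is smooth across the interface $\pa\Omega^0$. It is not, in the generic case allowed by the theorem: $f-\rho e$ is supported in $\overline{\Omega^0}$ and may be nonzero on $\{x_1=0\}$ (this is the physically relevant situation, cf.\ the depletion-layer motivation and Section \ref{subsecintrinsic}), so the forcing jumps across the interface and second normal derivatives of $u^\eps$ do not coincide with those of the bulk profiles at $x_1=0^+$. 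The correct coupling is continuity of velocity \emph{and stress}, \eqref{continuity_vel}--\eqref{continuity_stress}, combined with solving the forcing-free layer equations \eqref{stokesx}--\eqref{stokesdiv} in the rescaled variable $s=x_1/\eps$; this brings in the layer pressure $P^0=(-\pa_1 u^0_1+p^0)\vert_{x_1=0}$, which drives a Poiseuille-type quadratic term in $U^2$ that a velocity Taylor expansion cannot see. Concretely, at order $\eps^2$ your recipe gives the tangential data
\begin{equation*}
u^2_{j}\vert_{x_1=0}=\pa_1 u^1_{j}\vert_{x_1=0}-\tfrac12\,\pa_1^2 u^0_{j}\vert_{x_1=0},\qquad j=2,3,
\end{equation*}
which, using the bulk Stokes equation on the boundary, equals the correct value in \eqref{systemu2} \emph{plus} $\tfrac12\,(f-\rho e)_j\vert_{x_1=0}$. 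In the homogeneous-suspension computation of Section \ref{subsecintrinsic} this spurious cancellation would erase precisely the $\tfrac12$ in $u^2\vert_{x_1=0}=(0,0,\tfrac12)$, i.e.\ the intrinsic convection term. (At order $\eps$ your data agree with \eqref{systemu1}, since $\pa_1 u^0_1\vert_{x_1=0}=0$ by incompressibility and no-slip, so the first corrector and Corollary \ref{coro1} are unaffected.)

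The same defect also breaks your remainder estimate beyond low order: if you extend the bulk profiles into $D^\eps$ by Taylor polynomials, the Stokes residual in the strip is $O(1)$ (the extension carries the effect of $f-\rho e$, while the true equation in $D^\eps$ has no forcing), and the energy argument then only yields $\|\na r^{\eps,m}\|_{L^2}\lesssim \eps^{3/2}$, not $O(\eps^{m+1})$. Your alternative suggestion of "solving the strip problem exactly" with the bulk trace as Dirichlet datum does not repair this by itself, because it provides no equation feeding the strip back into the bulk other than the Taylor shortcut, and it ignores the stress/pressure matching. The fix is the paper's construction: genuine boundary-layer profiles $U^i(x_1/\eps,x_2,x_3),P^i$ determined inductively from \eqref{stokesx}--\eqref{stokesdiv}, \eqref{dirichlet}, \eqref{continuity_vel}, \eqref{continuity_stress}, which make the residuals in $D^\eps$ of size $O(\eps^{m})$ pointwise in a layer of width $\eps$ and the stress jump at $\pa\Omega^0$ of size $O(\eps^{m+1})$, after which the energy estimate you outline (with the auxiliary divergence corrector, as in the paper's $\tilde u^{m+2}$) closes the proof. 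Your approach as written would only be valid under the additional hypothesis that $f-\rho e$ vanishes to high order on $\pa\Omega^0$, which is not assumed and would exclude the main cases of interest.
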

This theorem provides an asymptotic expansion of $u^\eps$ at arbitrary order, where each successive correction refines the description of the effect of the depletion layer.  One could relax the assumptions on the smoothness of $\rho$ and $f$: for instance, it is enough that $\na u^0,p^0$ belong to  $H^{m+2}(\Omega^0)$ to have an expansion with $m$ terms.  

The analysis leading to Theorem \ref{thm2} shows in particular that the inhomogeneous Dirichlet data for $u^1$ is given by 
\begin{equation} \label{corrector_equation}
 u^1 \vert_{\pa \Omega^0}   = (0, \pa_1 u^0_{2},\pa_1 u^0_{3})\vert_{\pa \Omega^0} 
\end{equation}
%while  
%\begin{equation} \label{corrector_equation_2}
% u^2 \vert_{\pa \Omega}   = (\frac{1}{2} \pa^2_x %u^0_x\vert_{\pa \Omega_0}, -  \frac{1}{2}\pa_2 p^0\vert_{\pa %\Omega_0} + \pa_x u^1_2\vert_{\pa \Omega_0}, -  \frac{1}{2}\pa_z p^0\vert_{\pa \Omega_0} + \pa_x u^1_z\vert_{\pa \Omega_0}  )
%\end{equation}
 This will imply, see Section \ref{sec_wall_law}:
 \begin{coro} \label{coro1}
Let $\rho$ and $f$ as in the previous theorem, and $u^S$ the solution of the Stokes system with Navier slip boundary condition: 
 \begin{equation} \label{improved_approximation}
\begin{aligned}
-\Delta u^S + \na p^S & = f -   \rho e  , \quad \text{ in } \: \Omega^0  \\
\div u^S & = 0,  \quad \text{ in } \: \Omega^0 \\
 u^S\vert_{\pa \Omega^0}   & =   \eps (0, \pa_1 u^S_2, \pa_1 u^S_3)\vert_{\pa \Omega^0} 
\end{aligned}
\end{equation}
Then, one has the estimate 
$$\|u^\eps-u^S\|_{\dot{H}^1(\Omega^0)} = O(\eps^2) $$
 \end{coro}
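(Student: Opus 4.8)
\textbf{Proof proposal for Corollary \ref{coro1}.} The plan is to combine Theorem \ref{thm2} with an expansion of $u^S$ in powers of $\eps$. Applying Theorem \ref{thm2} with $m=1$ gives $u^\eps = u^0 + \eps u^1 + O(\eps^2)$ in $\dot{H}^1(\Omega^0)$, where $u^1$ solves a homogeneous Stokes system in $\Omega^0$ with the boundary data \eqref{corrector_equation}; by the triangle inequality it then suffices to prove that
\[ \|u^S - u^0 - \eps u^1\|_{\dot{H}^1(\Omega^0)} = O(\eps^2). \]
First I would record the basic well-posedness of \eqref{improved_approximation}: since $u^S_1\vert_{\pa\Omega^0}=0$, the slip condition rewrites as a Navier (Robin) condition $2(D(u^S)n)_{\mathrm{tan}} + \tfrac1\eps\,u^S_{\mathrm{tan}} = 0$ on $\pa\Omega^0$ (with $n$ the outward unit normal and friction coefficient $\tfrac1\eps>0$), so a Lax--Milgram argument for the coercive form $\int_{\Omega^0}2D(u):D(v) + \tfrac1\eps\int_{\pa\Omega^0}u_{\mathrm{tan}}\cdot v_{\mathrm{tan}}$ on divergence-free fields with $\na u\in L^2(\Omega^0)$, $u\vert_{\pa\Omega^0}\in L^2(\pa\Omega^0)$ and $u_1\vert_{\pa\Omega^0}=0$ yields a unique $u^S$, bounded in $\dot{H}^1(\Omega^0)$ uniformly in $\eps$. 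As $f$ and $\rho$ are smooth and compactly supported, Stokes elliptic regularity in the half-space makes $u^S$ smooth up to $\pa\Omega^0$ with enough decay for all the traces below to be meaningful.

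Next I would run the formal expansion $u^S \sim v^0 + \eps v^1 + \eps^2 v^2 + \dots$ by matching powers of $\eps$ in \eqref{improved_approximation}: the profile $v^0$ solves \eqref{crude_approx}, hence $v^0=u^0$, and for $k\ge1$ the profile $v^k$ solves a homogeneous Stokes system in $\Omega^0$ with $v^k\vert_{\pa\Omega^0} = (0,\pa_1 v^{k-1}_2,\pa_1 v^{k-1}_3)\vert_{\pa\Omega^0}$. In particular $v^1\vert_{\pa\Omega^0} = (0,\pa_1 u^0_2,\pa_1 u^0_3)\vert_{\pa\Omega^0}$, which is \emph{exactly} the boundary data \eqref{corrector_equation} of $u^1$; by uniqueness $v^1 = u^1$. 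The second profile $v^2$ solves homogeneous Stokes with $v^2\vert_{\pa\Omega^0} = (0,\pa_1 u^1_2,\pa_1 u^1_3)\vert_{\pa\Omega^0}$ (which in general differs from the second profile $u^2$ of Theorem \ref{thm2}). All these profiles are smooth with decay.

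The rigorous step is then the remainder estimate. Set $r^\eps := u^S - u^0 - \eps u^1 - \eps^2 v^2$. Since $u^0$ absorbs the forcing and $u^1,v^2$ solve homogeneous Stokes systems, $r^\eps$ solves a homogeneous Stokes system in $\Omega^0$; moreover $r^\eps_1\vert_{\pa\Omega^0}=0$ and, using $u^S_j\vert_{\pa\Omega^0}=\eps\,\pa_1 u^S_j\vert_{\pa\Omega^0}$ together with $u^1_j\vert_{\pa\Omega^0}=\pa_1 u^0_j\vert_{\pa\Omega^0}$ and $v^2_j\vert_{\pa\Omega^0}=\pa_1 u^1_j\vert_{\pa\Omega^0}$, one computes for $j=2,3$
\[ r^\eps_j\big|_{\pa\Omega^0} \;=\; \eps\,\pa_1\big(u^S-u^0-\eps u^1\big)_j\big|_{\pa\Omega^0} \;=\; \eps\,\pa_1 r^\eps_j\big|_{\pa\Omega^0} + \eps^3\,\pa_1 v^2_j\big|_{\pa\Omega^0} , \]
so $r^\eps$ obeys a Navier-type condition with a boundary source of size $\eps^3$. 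Converting the normal derivative into the strain tensor and testing the momentum equation against $r^\eps$ gives the energy identity
\[ 2\|D(r^\eps)\|_{L^2(\Omega^0)}^2 + \tfrac1\eps\|r^\eps_{\mathrm{tan}}\|_{L^2(\pa\Omega^0)}^2 \;=\; -2\eps^2\!\int_{\pa\Omega^0}(D(v^2)n)_{\mathrm{tan}}\cdot r^\eps_{\mathrm{tan}} , \]
and a weighted Young inequality absorbing $\tfrac1{2\eps}\|r^\eps_{\mathrm{tan}}\|^2_{L^2(\pa\Omega^0)}$ into the left-hand side yields $\|D(r^\eps)\|_{L^2(\Omega^0)}^2 \lesssim \eps^5\|D(v^2)n\|_{L^2(\pa\Omega^0)}^2$, hence $\|r^\eps\|_{\dot{H}^1(\Omega^0)} = O(\eps^{5/2})$ by Korn's inequality. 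Therefore $\|u^S-u^0-\eps u^1\|_{\dot{H}^1(\Omega^0)} \le \eps^2\|v^2\|_{\dot{H}^1(\Omega^0)} + \|r^\eps\|_{\dot{H}^1(\Omega^0)} = O(\eps^2)$, and combined with $\|u^\eps-u^0-\eps u^1\|_{\dot{H}^1(\Omega^0)}=O(\eps^2)$ from Theorem \ref{thm2} this closes the proof.

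I expect the main obstacle to be precisely this energy estimate. The effective friction in \eqref{improved_approximation} is $\sim 1/\eps$, so an order-$\eps^k$ discrepancy in the boundary condition only produces an $O(\eps^{k-1/2})$ gain in $\dot{H}^1$: testing directly against $u^S-u^0-\eps u^1$, whose boundary source has size $\eps^2$, would give only $O(\eps^{3/2})$, which is insufficient, and one is forced to first subtract the explicit second-order profile $\eps^2 v^2$ so that the residual source drops to size $\eps^3$. A secondary, more routine difficulty is the well-posedness and regularity of the Navier-slip Stokes problem with the singular boundary term on the unbounded domain $\Omega^0$, together with the decay of the profiles $u^k$ and $v^2$ needed to make the boundary integrals finite; this is where the smoothness and compact support of $f$ and $\rho$ enter.
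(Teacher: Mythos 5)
Your argument is correct and follows essentially the same route as the paper: reduce via Theorem \ref{thm2} to estimating $u^S-u^0-\eps u^1$, subtract the auxiliary profile $\eps^2 v^2$ (the paper's $V$, solving homogeneous Stokes with data $(0,\pa_1 u^1_2,\pa_1 u^1_3)\vert_{\pa\Omega^0}$) so that the boundary source drops to $O(\eps^3)$, and close with the energy estimate in which the $\eps^{-1}$ friction term absorbs the trace, giving $O(\eps^{5/2})$ for the remainder and hence $O(\eps^2)$ overall. The only differences are cosmetic (symmetrized form $2\|D\cdot\|^2$ plus Korn instead of $\|\na\cdot\|^2$, and an explicit Lax--Milgram well-posedness remark the paper leaves implicit).
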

This $O(\eps^2)$ estimate shows that imposing a wall law of Navier type at the artificial boundary 
$\pa \Omega^0$ improves the $O(\eps)$ error estimate provided by the homogeneous Dirichlet condition. It is the rigorous translation of the notion of apparent slip evoked in several papers, \cite{Abbasi,Koponen, Ghosh}. We will further discuss it  in Section \ref{sec_final_comments}. The other phenomenon mentioned above, that is {\em intrinsic convection} is more subtle and corresponds to a degenerate regime of what we consider here. We will also discuss it in Section \ref{sec_final_comments}.

\section{Stokes problem on the half space: fundamental solution and Stokeslets} \label{sec_stokes}
For the analysis carried in this section \ref{sec_stokes} and the next section \ref{sec_approx}, there is no restriction in taking $\eps = 0$, that is working on $\Omega^0 =(0,+\infty)\times \R^2$. The results adapt straightforwardly to $\Omega^\epsilon=(-\varepsilon,+\infty)\times \R^2$ by  translation.

We introduce the fundamental solution to the Stokes equation on the half space $(0,+\infty)\times \R^2$. Given $y \in (0,+\infty)\times \R^2$, we set $x \mapsto G(x,y)$ the unique solution to 
\begin{equation}
\left\{
\begin{array}{rcl}
-\Delta u+\nabla q &=& \delta_{y} \mathbb{I} , \text{ on } \Omega^0 \\
\div(u)&=&0, \text{ on } \Omega^0 \\
u\vert_{x_1=0}&=&0
\end{array}
\right. 
\end{equation}
where $\mathbb{I}$ is the identity matrix in $3d$. We have for all $x \neq y$
\begin{equation}\label{ineq:fundamental_solution}
|G(x,y)| \leq \frac{C}{|x-y|}, \quad |\nabla_y G(x,y)|+ |\nabla_x G(x,y)| \leq \frac{C}{|x-y|^2}
\end{equation}
The proof of such an estimate is postponed to Section \ref{appendix}.
Let us also introduce the Stokes velocity for a sedimenting sphere in a half space. Given $F\in \R^3$, $r>0$ and $y_0 \in \Omega^0$ such that $(y_0)_1>r$,  we use the notation $x \mapsto \mathcal{U}^{st}_{r}[F](x,y_0) \in \dot{H}^1(\Omega^0)$ for the unique solution of the Stokes equation
\begin{equation} \label{eq_stokeslet}
\left\{
\begin{array}{rcl}
-\Delta u+\nabla q &=& 0,  \text{ on } \Omega^0 \setminus \overline{B(y_0,r)}\\
\div(u)&=&0,  \text{ on }\Omega^0 \setminus \overline{B(y_0,r)}\\
Du &=&0, \text { on } B(y_0,r)\\
u\vert_{x_1=0}&=&0 
\end{array}
\right.
\end{equation}
$$
\int_{\partial B(y_0,r)} \sigma(u,p) n =F,\quad 
\int_{\partial B(y_0,r)} [\sigma(u,p) n] \times(x-y_0)=0 $$
Note that $\mathcal{U}^{st}_{r}[F](\cdot,y_0)$ is implicitly given by a rigid vector field in  $B(y_0,r)$. The associated pressure will be denoted by $\mathcal{P}^{st}_{r}[F](\cdot,y_0)$. We have the following 
\begin{prop}\label{prop:stokeslet}
Assuming that $ (y_0)_1> \theta r$ for some $\theta>1$: 
\begin{enumerate}
\item We have for all $x \in \Omega^0 \setminus B(y_0,r)$ \label{ref:it1}
$$
\mathcal{U}^{st}_{r}[F](x,y_0)= \frac{1}{r} \mathcal{U}^{st}_{1}[F]\left(\frac{x}{r},\frac{y_0}{r} \right)
$$
% \item Setting $r=1$ and $y_0 \in \Omega^0$ such that $(y_0)_1 >\theta$ we have \label{ref:it2}
% \begin{equation}\label{eq:decomposition_stokeslet}
% \mathcal{U}^{st}_{1}[F](x,y_0)= G(x,y_0)F + \mathcal{H}_1[F](x,y_0)
% \end{equation}
% with $\mathcal{H}_1$ such that for any $ x \not \in B(y_0,\theta)$ 
% \begin{equation}\label{eq:decay_rest_stokeslet}
% \left|\mathcal{H}_1[F](x,y_0) \right|\leq \frac{C_\theta|F| }{|x-y_0|^2}, \quad \left|\nabla_x \mathcal{H}_1[F](x,y_0) \right|\leq \frac{C_\theta |F|}{|x-y_0|^3},
% \end{equation}

% Moreover for any $x \in \Omega^0\setminus B(y_0,1)$
% \begin{equation}\label{eq:decay_Stokeslet}
% \left|\mathcal{U}^{st}_{1}[F](x,y_0) \right| \leq \frac{C_\theta|F|}{|x-y_0|}, \quad \left|\nabla \mathcal{U}^{st}_{1}[F](x,y_0) \right| \leq \frac{C_\theta|F|}{|x-y_0|^2}
% \end{equation}
\item For any $r>0$ and $y_0 \in \Omega^0$ satisfying $(y_0)_1 >\theta r$, there exists $\mathcal{H}_r[F](\cdot,y_0)$  such that for all $ x \not \in B(y_0,\theta r)$  \label{ref:it2}
\begin{equation}\label{eq:decomposition_stokeslet}
\mathcal{U}^{st}_{r}[F](x,y_0)= G(x,y_0)F + \mathcal{H}_r[F](x,y_0)
\end{equation}
\begin{equation}\label{eq:decay_rest_stokeslet}
\left|\mathcal{H}_r[F](x,y_0) \right|\leq r \frac{C_\theta|F| }{|x-y_0|^2}, \quad \left|\nabla_x \mathcal{H}_r[F](x,y_0) \right|\leq r \frac{C_\theta |F|}{|x-y_0|^3},
\end{equation}
\item The energy satisfies \label{ref:it3} $$\|\nabla \mathcal{U}^{st}_r[F](\cdot,y_0)\|_{L^2(\Omega^0)}^2=2\|D \mathcal{U}^{st}_r[F](\cdot,y_0)\|_{L^2(\Omega^0)}^2$$ and  
\begin{equation}\label{eq:energy}
\|\nabla \mathcal{U}^{st}_r[F](\cdot,y_0)\|_{L^2(\Omega^0)}\leq \frac{C|F|}{\sqrt{r}}
\end{equation}
with a constant $C>0$ independent of $y_0$, $F$ and $r$.
% Moreover for any $x \in \Omega^0 \setminus B(y_0, r)$
% $$
% \left|\mathcal{U}^{st}_{1}[F](x,y_0) \right| \leq \frac{C_\theta|F|}{|x-y_0|}, \quad \left|\nabla \mathcal{U}^{st}_{1}[F](x,y_0) \right| \leq \frac{C_\theta|F|}{|x-y_0|^2}
% $$
\end{enumerate}
\end{prop}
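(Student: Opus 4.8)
\emph{Sketch of proof.} The whole scheme rests on the scale invariance of the Stokes system and of the cone $\Omega^0$, which brings everything back to the normalized case $r=1$, and on a comparison of the half-space Stokeslet with the explicit whole-space one. Fix $r>0$ and $y_0$ with $(y_0)_1>\theta r$, and set $v(z):=r\,\mathcal U^{st}_r[F](rz,y_0)$, $\pi(z):=r^2\,\mathcal P^{st}_r[F](rz,y_0)$. A change of variables shows that $(v,\pi)$ solves \eqref{eq_stokeslet} for the ball $B(y_0/r,1)$ with the same force $F$: the bulk equation, incompressibility, rigidity in the ball and the homogeneous Dirichlet condition on $\{z_1=0\}$ are covariant under this scaling, and the only point requiring a short computation is that the force/torque normalization is preserved, the scaling factors of $\sigma$ and of the surface measure cancelling. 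By uniqueness in $\dot H^1(\Omega^0)$, $v=\mathcal U^{st}_1[F](\cdot,y_0/r)$, which is item 1. The same invariance gives $G(\lambda x,\lambda y)=\lambda^{-1}G(x,y)$, hence $\tfrac1r G(\tfrac xr,\tfrac{y_0}r)F=G(x,y_0)F$, and a change of variables in the norm gives $\|\nabla\mathcal U^{st}_r[F](\cdot,y_0)\|_{L^2(\Omega^0)}=r^{-1/2}\|\nabla\mathcal U^{st}_1[F](\cdot,y_0/r)\|_{L^2(\Omega^0)}$, and similarly for $\mathcal H_r$ versus $\mathcal H_1$. So items 2 and 3 for general $r$ follow from the case $r=1$, and the powers of $r$ in \eqref{eq:decay_rest_stokeslet}, \eqref{eq:energy} are precisely those the scaling produces. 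From now on $r=1$, and all estimates below must hold uniformly in $y_0$ over $\{(y_0)_1>\theta\}$ (with constants allowed to depend on $\theta$); I treat item 3 first, since item 2 uses it.

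\textbf{Item 3.} Let $u:=\mathcal U^{st}_1[F](\cdot,y_0)$. The identity $\|\nabla u\|_{L^2(\Omega^0)}^2=2\|Du\|_{L^2(\Omega^0)}^2$ is the elementary fact that $\int_{\Omega^0}\partial_iu_j\,\partial_ju_i=0$ for $u\in\dot H^1(\Omega^0)$ divergence free and vanishing on $\{x_1=0\}$: by incompressibility $\partial_iu_j\,\partial_ju_i=\partial_i(u_j\,\partial_ju_i)$, the boundary term on $\{x_1=0\}$ vanishes since $u$ does, and $u\in H^1_{\mathrm{loc}}$ leaves no interface contribution across $\partial B(y_0,1)$. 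For the bound I would compare with the analogous Stokeslet $\tilde u$ for the full space $\R^3$ (same ball $B(y_0,1)$, same force $F$, the no-slip wall replaced by decay at infinity), which is a rigid translation at speed $F/(6\pi)$ inside the sphere and satisfies $\|\nabla\tilde u\|_{L^2(\R^3)}^2=\tfrac{|F|^2}{6\pi}$ by the energy identity in $\R^3$. The difference $w:=(u-\tilde u)|_{\Omega^0}$ solves the homogeneous Stokes system in $\Omega^0\setminus\overline{B(y_0,1)}$, is rigid in $B(y_0,1)$, exerts zero net force and torque on $\partial B(y_0,1)$ (both $u$ and $\tilde u$ carry force $F$ and zero torque), and has $w|_{x_1=0}=-\tilde u|_{x_1=0}$. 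The standard energy estimate for this boundary value problem — testing against $w$ annihilates the force/torque boundary term and leaves only the wall term — yields $\|\nabla w\|_{L^2(\Omega^0)}\lesssim\|\tilde u|_{x_1=0}\|_{\dot H^{1/2}(\R^2)}$, with a constant depending only on $\theta$ (the ball stays at distance $\ge\theta-1$ from the wall, and the half-space Sobolev inequality controls zero-order terms uniformly). Finally $\tilde u|_{x_1=0}$ is, up to a faster-decaying finite-size correction, the Oseen tensor restricted to the plane $\{x_1=0\}$ at distance $(y_0)_1>\theta$ from the source, so an explicit Fourier computation gives $\|\tilde u|_{x_1=0}\|_{\dot H^{1/2}(\R^2)}\lesssim|F|/\sqrt{(y_0)_1}\lesssim|F|$. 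Altogether $\|\nabla u\|_{L^2(\Omega^0)}\le\|\nabla\tilde u\|_{L^2(\R^3)}+\|\nabla w\|_{L^2(\Omega^0)}\le C|F|$.

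\textbf{Item 2.} Set $\mathcal H_1[F](x,y_0):=\mathcal U^{st}_1[F](x,y_0)-G(x,y_0)F$, a solution of the homogeneous Stokes system in $\Omega^0\setminus\overline{B(y_0,1)}$ with homogeneous Dirichlet data on $\{x_1=0\}$. Its key property is that it exerts zero net force and torque on $\partial B(y_0,\rho)$ for $1\le\rho\le\theta$: $\mathcal U^{st}_1[F]$ does so with force $F$ by definition, while integrating $\operatorname{div}\sigma(G(\cdot,y_0)F)=-\delta_{y_0}F$ over $B(y_0,\rho)$ gives $G(\cdot,y_0)F$ the same force and, since $\sigma$ is symmetric, zero torque about $y_0$. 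I would then write the single- and double-layer representation of $\mathcal H_1[F](\cdot,y_0)$ over $\partial B(y_0,\theta)$ with kernels $G$ and its stress tensor; the wall contribution drops because $G(x,\cdot)$ vanishes on $\{x_1=0\}$ (by reciprocity $G(x,z)=G(z,x)^{T}$) and so does $\mathcal H_1$. The vanishing net force lets me subtract the monopole, replacing $G(x,z)$ by $G(x,z)-G(x,y_0)$ in the single layer; by \eqref{ineq:fundamental_solution} and its scaled higher-order analogue this is $O(|z-y_0|\,|x-y_0|^{-2})$ on $\partial B(y_0,\theta)$ once $|x-y_0|\ge2\theta$, and the double-layer kernel is $O(|x-y_0|^{-2})$ there. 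Hence $|\mathcal H_1[F](x,y_0)|\lesssim_\theta|x-y_0|^{-2}\big(\|\sigma(\mathcal H_1)\|_{L^1(\partial B(y_0,\theta))}+\|\mathcal H_1\|_{L^1(\partial B(y_0,\theta))}\big)\lesssim_\theta|F|\,|x-y_0|^{-2}$, the last step by local Stokes regularity away from $\partial B(y_0,1)$, \eqref{ineq:fundamental_solution} and item 3; the band $\theta\le|x-y_0|\le2\theta$ is absorbed into the constant. Differentiating the representation in $x$ gains a factor $|x-y_0|^{-1}$ and gives the gradient bound; undoing the scaling via item 1 and $G(\lambda x,\lambda y)=\lambda^{-1}G(x,y)$ then yields \eqref{eq:decomposition_stokeslet}--\eqref{eq:decay_rest_stokeslet}.

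\textbf{Expected main obstacle.} The recurring difficulty is the uniformity in the particle position: $(y_0)_1$ runs over the whole half-line $(\theta r,\infty)$, so one must prevent the Stokes estimates from degenerating both as $y_0\to\infty$ (handled by the whole-space comparison and the quantitative decay of $\tilde u|_{x_1=0}$) and as $y_0$ nears the wall (where only $\theta>1$ keeps the ball off it). The second nontrivial point is the clean execution of the representation formula in item 2 — its entire purpose being that the \emph{extra} decay of $\mathcal H_1$ is produced solely by the vanishing monopole of $\mathcal U^{st}_1[F]-G(\cdot,y_0)F$, which is the half-space counterpart of the Stokeslet/Stresslet splitting at the core of the method of reflections.
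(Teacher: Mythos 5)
Your proposal takes a genuinely different route from the paper, and in outline it could be made to work, but as written it has two concrete gaps. For comparison: the paper proves item 3 in essentially one line, by testing the weak formulation with the solution itself, which gives $2\|D\,\mathcal U^{st}_1[F]\|_{L^2}^2=F\cdot\dashint_{B(y_0,1)}\mathcal U^{st}_1[F]$, and then bounding the average through the embedding $\{u\in\dot H^1(\Omega^0),\ u\vert_{\pa\Omega^0}=0\}\subset L^6(\Omega^0)$; and it proves item 2 by representing $\mathcal U^{st}_1[F]$ over $\pa B(y_0,1)$ itself (where the double layer vanishes because the trace is rigid), subtracting the mean of $G(x,\cdot)$ thanks to $\int_{\pa B}\sigma n=F$, and estimating the remaining surface integral \emph{by duality}: the mean-free kernel is lifted by the divergence-free extension of Lemma \ref{lemme:extension}, so that only the energy bound of item 3 and \eqref{ineq:fundamental_solution} are needed — no traction, pressure, or local regularity estimates at all.

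The gaps in your version are exactly at the points this trick is designed to avoid. (i) In item 3, the inequality $\|\nabla w\|_{L^2(\Omega^0)}\lesssim_\theta\|\tilde u\vert_{x_1=0}\|_{\dot H^{1/2}(\R^2)}$ is not the ``standard energy estimate'' you invoke: $w$ is constrained to be rigid in $B(y_0,1)$ and force/torque free, so the energy argument requires a divergence-free extension of the wall data which is rigid in the ball, with a constant uniform in $y_0$; you never construct it (a cutoff-plus-Bogovskii construction in an unbounded slab is not immediate, and one must also handle the possibly nonzero flux of $\tilde u$ through the wall when $F_1\neq 0$). Note that the cleanest repair makes your Fourier computation superfluous: $-\tilde u$ itself is an admissible extension (divergence free, correct trace, rigid in the ball), and testing the equation for $w$ with $u=w+\tilde u$ gives directly $\|D\,\mathcal U^{st}_1[F]\|_{L^2(\Omega^0)}\le\|D\tilde u\|_{L^2(\R^3)}\le C|F|$. (ii) In item 2, your representation over $\pa B(y_0,\theta)$ needs $\|\sigma(\mathcal H_1)\|_{L^1(\pa B(y_0,\theta))}+\|\mathcal H_1\|_{L^1(\pa B(y_0,\theta))}\lesssim_\theta|F|$ uniformly in $y_0$. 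Since $(y_0)_1$ may be arbitrarily close to $\theta r$, that sphere can come arbitrarily close to the wall, so ``local Stokes regularity away from $\pa B(y_0,1)$'' (interior estimates) is not enough: you need estimates up to the flat boundary (where $\mathcal H_1$ vanishes), or an intermediate sphere of radius, say, $(1+\theta)/2$, which stays at distance $(\theta-1)/2$ from the wall. In addition, your double-layer kernel is the stress of $G(x,\cdot)$ in its \emph{second} variable, hence involves the associated pressure, whose $O(|x-z|^{-2})$ decay is not contained in \eqref{ineq:fundamental_solution}; it does follow from the explicit formula of the appendix, but it must be stated and proved. With these two points repaired (and the scaling reductions, which coincide with the paper's item 1), your comparison-with-the-whole-space-Stokeslet and monopole-subtraction scheme would give \eqref{eq:decomposition_stokeslet}--\eqref{eq:energy}; as it stands, the uniformity in $y_0$ near the wall and the missing extension/pressure estimates are genuine holes.
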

In order to prove such a result we first recall the following extension result which can be obtained by standard scaling arguments. 
\begin{lemme}\label{lemme:extension}
Let $p>1$, $\theta>1$, $a \in \R^3$, $\lambda>0$  and $v \in W^{1,p}(B(a,\lambda))$ a divergence free velocity field with vanishing mean on $B(a,\lambda)$. There exists an extension $u_v \in W^{1,p}_0(B(a,\theta \lambda))$ 
 of $v$ which is divergence free and such that 
$$
\|\na u_v\|_{L^p(B(a,\theta \lambda) \setminus B(a,\lambda))} \leq C_\theta \|\nabla v\|_{L^p(B(a,\lambda))}
$$
with a constant depending only on $\theta$ and $p$ and not on $\lambda$ and $a$.
\end{lemme}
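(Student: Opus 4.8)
The plan is to reduce to the unit ball by scaling and translation, and then construct the extension as a cut-off of a Sobolev extension of $v$, corrected by a Bogovski\u\i{}-type field in order to restore the divergence-free condition.

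\textbf{Reduction by scaling.} I would first note that the claimed estimate is invariant under $x = a + \lambda y$: setting $\hat v(y) := v(a+\lambda y)$, the field $\hat v$ is again divergence free with vanishing mean on $B(0,1)$, and for any field $w$ on $B(a,\theta\lambda)$ the rescaled field $\hat w(y) := w(a+\lambda y)$ obeys $\|\na_y\hat w\|_{L^p} = \lambda^{1-3/p}\|\na_x w\|_{L^p}$ on corresponding domains, so both members of the inequality carry the same power of $\lambda$. It therefore suffices to produce, for $a=0$, $\lambda=1$, an extension $\hat u_v$ of $\hat v$, and then to set $u_v(x) := \hat u_v((x-a)/\lambda)$; divergence-freeness, the homogeneous boundary condition, and the fact that $u_v = v$ on $B(a,\lambda)$ are all preserved, and the resulting constant depends only on $p$ and $\theta$.

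\textbf{Cut-off of a Sobolev extension.} On $B(0,1)$, the vanishing-mean hypothesis and the Poincar\'e--Wirtinger inequality give $\|v\|_{W^{1,p}(B(0,1))} \le C_p\|\na v\|_{L^p(B(0,1))}$. Let $\tilde v \in W^{1,p}(\R^3)$ be a standard Sobolev extension of $v$ with $\|\tilde v\|_{W^{1,p}(\R^3)} \le C_p\|v\|_{W^{1,p}(B(0,1))}$, and let $\chi \in C_c^\infty(B(0,\theta))$ with $\chi \equiv 1$ on a neighbourhood of $\overline{B(0,1)}$. Set $w := \chi\tilde v \in W^{1,p}_0(B(0,\theta))$: then $w = v$ on $B(0,1)$, $\|\na w\|_{L^p(B(0,\theta))} \le C_{p,\theta}\|\na v\|_{L^p(B(0,1))}$, and $g := \div w = \na\chi\cdot\tilde v + \chi\,\div\tilde v$ vanishes on $B(0,1)$ (there $\na\chi = 0$ and $\div\tilde v = \div v = 0$), hence $g$ is supported in the annulus $A := B(0,\theta)\setminus\overline{B(0,1)}$, with $\|g\|_{L^p(A)} \le C_{p,\theta}\|\na v\|_{L^p(B(0,1))}$ and $\int_A g = \int_{B(0,\theta)}\div w = 0$ since $w \in W^{1,p}_0$.

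\textbf{Bogovski\u\i{} correction and conclusion.} Since $A$ is a bounded Lipschitz domain (a finite union of subdomains each star-shaped with respect to a ball) and $g \in L^p(A)$ has zero average, the classical Bogovski\u\i{} construction provides $\psi \in W^{1,p}_0(A)$ with $\div\psi = g$ and $\|\na\psi\|_{L^p(A)} \le C_{p,\theta}\|g\|_{L^p(A)}$; extend $\psi$ by $0$ to $B(0,\theta)$. Then $u_v := w - \psi$ is divergence free, belongs to $W^{1,p}_0(B(0,\theta))$, coincides with $v$ on $B(0,1)$ (as $\psi \equiv 0$ there), and $\|\na u_v\|_{L^p(B(0,\theta)\setminus B(0,1))} \le \|\na w\|_{L^p} + \|\na\psi\|_{L^p(A)} \le C_{p,\theta}\|\na v\|_{L^p(B(0,1))}$. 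Undoing the scaling yields the statement. No step presents a real difficulty; the only points requiring care are the use of the vanishing-mean condition (through Poincar\'e) so that the right-hand side only involves $\na v$, and the bookkeeping of the scaling so that the constant is independent of $a$ and $\lambda$. The dependence of $C_\theta$ on $\theta$ is unavoidable, as the Bogovski\u\i{} constant on $A$ blows up as $\theta \to 1^+$, but this is consistent with the statement.
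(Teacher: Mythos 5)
Your proof is correct and follows exactly the route the paper has in mind: the paper gives no details, stating only that the lemma ``can be obtained by standard scaling arguments'', and your argument (scaling to the unit ball, Poincar\'e--Wirtinger via the vanishing mean, cut-off of a Sobolev extension, Bogovski\u\i{} correction supported in the annulus) is the standard construction correctly carried out, including the zero-average check for the Bogovski\u\i{} datum and the scaling bookkeeping that makes $C_\theta$ independent of $a$ and $\lambda$.
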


\begin{proof}[Proof of Proposition \ref{prop:stokeslet}] 
\textit{Proof of item \ref{ref:it1}.} The proof relies on a standard scaling argument.\\
\textit{Proof of item \ref{ref:it3}.}
The first identity is valid for  any divergence-free vector field $v$ vanishing at $x_1 = 0$: it follows from the identity $-\Delta v = -2 \div(Dv)$, multiplying by $v$ and integrating by parts. As regards  the energy bound, we use the estimates: 
\begin{multline*}
\|\nabla \mathcal{U}^{st}_1[F](\cdot,y_0)\|_{L^2((0,+\infty)\times \R^2)}^2=2\|D \mathcal{U}^{st}_1[F](\cdot,y_0)\|_{L^2((0,+\infty)\times \R^2)}^2\\
=2\|D \mathcal{U}^{st}_1[F](\cdot,y_0)\|_{L^2((0,+\infty)\times \R^2 \setminus B(y_0,1))}^2= F \cdot \oint_{B(y_0,1)} \mathcal{U}^{st}_1[F](\cdot,y_0) \leq C |F| \| \nabla \mathcal{U}^{st}_1[F](\cdot,y_0) \|_{L^2}
\end{multline*}
Note that for the last inequality we have used the Sobolev imbedding
$$ \{ u \in \dot{H}^1(\Omega^0), \quad u\vert_{\pa \Omega^0}= 0 \}   \subset L^6(\Omega^0). $$
For general $r$, we then use  the scaling argument of item 1.

\textit{Proof of item \ref{ref:it2}}. We need to show that
$$\mathcal{H}_r[F](\cdot,y_0)  :=\mathcal{U}^{st}_{r}[F](\cdot,y_0) - G(\cdot,y_0) F $$
satisfies the inequalities in \eqref{eq:decay_rest_stokeslet}. Note that $\frac{1}{r}G(\frac{x}{r},\frac{y_0}{r})=G(x,y_0)$. Together with item \ref{ref:it1}, it shows that  $\mathcal{H}_r[F]$ inherits the scaling property: 
$\mathcal{H}_{r}[F](x,y_0)= \frac{1}{r} \mathcal{H}_{1}[F]\left(\frac{x}{r},\frac{y_0}{r} \right)$.
This allows to restrict to $r=1$.  We have for all $x \in \Omega^0 \setminus B(y_0,\theta)$
\begin{multline*}
\mathcal{U}^{st}_{1}[F](x,y_0) - \left(\dashint_{ B(y_0,1)} G(x,z) d \sigma(z) \right) F = \\
\int_{\partial B(y_0,1)} \left(G(x,y)-\dashint_{ B(y_0,1)} G(x,z) d z \right)\sigma(\mathcal{U}^{st}_{1}[F](y,y_0)) n d \sigma (y)
\end{multline*}
% \begin{align*}
% \mathcal{U}^{st}_{1}[F](x,y_0)&= \int_{\partial B(y_0,1)} G(x,y) \Sigma(\mathcal{U}^{st}_{1}[F](y,y_0)) n d \sigma y\\
% &= G(x,y_0) F + \int_0^1 \int_{\partial B(y_0,1)} \nabla G(x,ty+(1-t)y_0 ) (y-y_0) \Sigma(\mathcal{U}^{st}_{1}[F](x,y_0)) n d \sigma y dt
% \end{align*}
Hence, using Lemma \ref{lemme:extension}, we denote by $v\in H^1_0(B(y_0,\theta))$ a divergence free extension of $y \mapsto G(x,y)-\dashint_{ B(y_0,1)} G(x,z) d z$ such that 
$$
\| \nabla v \|_{L^2(B(y_0,\theta))} \leq C_\theta \|y\mapsto \nabla_y G(x,y) \|_{L^2(B(y_0,1))}
$$
with a constant depending only on $\theta$, one gets
\begin{multline*}
\left|\mathcal{U}^{st}_{1}[F](x,y_0)- \left(\dashint_{ B(y_0,1)} G(x,z) d z \right) F \right|= \left|\int_{B(y_0,\theta)\setminus B(y_0,1)} \nabla v : \nabla \mathcal{U}^{st}_{1}[F](\cdot,y_0)\right|\\
\leq \| \nabla \mathcal{U}^{st}_{1}[F](\cdot,y_0)\|_{L^2} \| \nabla v \|_{L^2(B(y_0,\theta))}
\end{multline*}
% On the one hand, we recall that using Sobolev embedding  we have
% \begin{multline*}
% \| \nabla \mathcal{U}^{st}_{1}[F](\cdot,y_0)\|_{L^2(\R^3_+)}^2 = F \cdot \oint_{B(y_0,1)} \mathcal{U}^{st}_{1}[F](\cdot,y_0) \\ \leq C|F| \|  \mathcal{U}^{st}_{1}[F](\cdot,y_0) \|_{L^6} \leq C |F|  \| \nabla \mathcal{U}^{st}_{1}[F](\cdot,y_0)\|_{L^2(\R^3_+)}
% \end{multline*}
We use then the energy estimate for $ \mathcal{U}^{st}_{1}[F]$ of item \ref{ref:it3}
together with inequality \eqref{ineq:fundamental_solution} which yields
\begin{equation}\label{ineq1}
\| \nabla v \|_{L^2(B(y_0,\theta))}  \le C_\theta \|y\mapsto \nabla_y G(x,y)\|_{L^\infty(B(y_0,1))} \leq \frac{C'_\theta}{|x-y_0|^2} 
\end{equation}
where we used that $|x-y|>(1-1/\theta)|x-y_0|$ provided that $|x-y_0|>\theta$. Hence, we write
\begin{align*}
\mathcal{H}_1[F](\cdot,y_0) &  = \mathcal{U}^{st}_{1}[F](\cdot,y_0) - \left(\dashint_{ B(y_0,1)} G(\cdot,z) dz \right) F
+\left(\dashint_{ B(y_0,1)} [G(\cdot,z)  - G(\cdot,y_0)] dz \right) F
\end{align*}
and the extra term  $\left(\dashint_{ B(y_0,1)} [G(\cdot,z)  - G(\cdot,y_0)] dz \right) F$  can be estimated analogously using \eqref{ineq1}. This concludes the proof of the first inequality in \eqref{eq:decay_rest_stokeslet}. The second one is similar, applying $\na_x$ to all quantities above, and using the second bound in \eqref{ineq:fundamental_solution} instead of the first one.  
%It suffices to estimate the second term in the right hand side.
% and then by elliptic regularity one can show that $ \|\Sigma(\mathcal{U}^{st}_{1}[F](\cdot,y_0))\|_{L^\infty(\pa B(y_0,1)} \leq C \| \nabla \mathcal{U}^{st}_{1}[F](\cdot,y_0)\|_{L^2(\R^3_+)} $ which yields the desired result by observing that 
% $$
% \left|\nabla G(x,ty+(1-t)y_0 ) (y-y_0) \right| \leq C \frac{1}{\min(|x-y_0|^2, |x-y|^2)} \leq \frac{C}{|x-y_0|^2}
% $$
% if for instance $|x-y_0|>2$. This shows the result for any $x \in \R^3_+ \setminus B(y_0,2)$ and we recover the result on $(\R^3_+\cap B(y_0,2))\setminus B(y_0,1)$ by elliptic regularity. We proceed analogously to estimate $\nabla \mathcal{H}[F](\cdot,y_0)$.

% For the decay in \eqref{eq:decay_Stokeslet}, it follows from \eqref{eq:decay_rest_stokeslet} for $x\in \R^3_+\setminus B(y_0,\theta)$ and the inequality still holds true in $B(y_0,\theta)\setminus B(y_0,1)$ by elliptic regularity.
\end{proof}

\section{Approximation of the velocity field} \label{sec_approx}

The main goal of this section is to prove Theorem \ref{thm1}. There is no loss of generality in proving it in the case  $\eps = 0$. We use the shortcuts $u^N = u^{N,0}$, $F=-\frac{1}{N}e $. Let us denote  
$$
u_{\text{app}}= \underset{i}{\sum} \mathcal{U}^{st}_R\left[F\right](x,X_i)+ u_{f,N}
$$
where we recall that $\mathcal{U}^{st}_R[F](\cdot,X_i)$ is the Stokeslet attached to $B_i=B(X_i,R)$, see \eqref{eq_stokeslet},  and where $u_{f,N}$ is the solution of the Stokes equation 
\begin{equation}\label{eq:u_f}
\left\{
\begin{array}{rcl}
-\Delta u_{f,N}+\nabla q_{f,N} &=& f (1-1_{\cup B_i}),  \text{ on }(0,+\infty)\times \R^2 \\
\div(u_{f,N})&=&0,  \text{ on }(0,+\infty)\times \R^2 \\
u_{f,N}\vert_{x_1=0}&=&0 
\end{array}
\right.
\end{equation}
We first aim to estimate the error between $u^N$ and 
$u_{\text{app}}$.
\subsection{From $u^N$ to $u_{\text{app}}$}
\begin{prop}
Let $K \Subset \overline{\Omega^0}$. We have for any $q<3/2$
$$
\|u^N-u_{\text{app}}\|_{L^q(K)}\leq C_{K} \phi
$$
\end{prop}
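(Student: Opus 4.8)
The strategy is the standard method-of-reflections energy comparison, adapted to the half-space. I would set $v = u^N - u_{\text{app}}$ and $q_v = p^N - p_{\text{app}}$, and observe that $v$ solves a homogeneous Stokes equation on $\Omega^0 \setminus \cup_i B_i$ with $v\vert_{x_1=0} = 0$, so the only source of error is the boundary behaviour of $v$ on the spheres $\pa B_i$ and the rigidity/force-balance constraints. By construction each $\mathcal{U}^{st}_R[F](\cdot, X_i)$ is rigid on $B_i$ and carries exactly the force $F = -\frac{1}{N}e$ with zero torque, and $u_{f,N}$ kills the forcing $f$ outside the balls; so the only discrepancy on $B_i$ comes from the \emph{other} Stokeslets $\sum_{j\neq i} \mathcal{U}^{st}_R[F](\cdot,X_j)$ and from $u_{f,N}$, which are not rigid on $B_i$. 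The first step is therefore to write the variational formulation: testing the equation for $v$ against $v$ itself (extended appropriately inside the balls), one gets $\|\nabla v\|_{L^2(\Omega^0 \setminus \cup B_i)}^2$ bounded by a sum over $i$ of boundary terms $\int_{\pa B_i}\sigma(v,q_v)n \cdot (v - w_i)$, where $w_i$ is the rigid field matching $v$ on $B_i$, and this in turn is controlled via Lemma~\ref{lemme:extension} by $\sum_i \|\nabla v\|_{L^2} \cdot \|\nabla(\sum_{j\neq i}\mathcal{U}^{st}_R[F](\cdot,X_j) + u_{f,N})\|_{L^2(B(X_i,\theta R))}$.

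The second step is to estimate, for each $i$, the quantity $\|\nabla(\sum_{j\neq i}\mathcal{U}^{st}_R[F](\cdot,X_j) + u_{f,N})\|_{L^2(B(X_i,\theta R))}$. Here I would use the decomposition of Proposition~\ref{prop:stokeslet}, item~\ref{ref:it2}: away from $B(X_j,\theta R)$, $\mathcal{U}^{st}_R[F](\cdot,X_j) = G(\cdot,X_j)F + \mathcal{H}_R[F](\cdot,X_j)$, and the gradient of each summand is bounded pointwise on $B(X_i,\theta R)$ by $C|F|/|X_i - X_j|^2$ using \eqref{ineq:fundamental_solution} and \eqref{eq:decay_rest_stokeslet} (the separation $|X_i - X_j| \geq 2\theta R$ from \eqref{hyp_dmin} guarantees we are in the regime where the decomposition applies and that $B(X_i,\theta R)$ and $B(X_j,\theta R)$ are disjoint). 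Integrating the square over a ball of radius $\theta R$ gives a factor $R^{3}$, so $\|\nabla(\sum_{j\neq i}\mathcal{U}^{st}_R[F](\cdot,X_j))\|_{L^2(B(X_i,\theta R))} \leq C R^{3/2}|F| \sum_{j\neq i} |X_i - X_j|^{-2} = \frac{C R^{3/2}}{N}\sum_{j\neq i}|X_i-X_j|^{-2}$. For $u_{f,N}$ one uses elliptic regularity for the Stokes equation with right-hand side in $L^\infty$ to bound $\|\nabla u_{f,N}\|_{L^\infty}$ and hence $\|\nabla u_{f,N}\|_{L^2(B(X_i,\theta R))} \leq C R^{3/2}$.

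The third step is summation. Plugging the pointwise bounds into the energy inequality and dividing by $\|\nabla v\|_{L^2}$ yields
$$\|\nabla v\|_{L^2(\Omega^0\setminus\cup B_i)} \leq C\, R^{3/2}\Big( \frac{1}{N}\sum_i \sum_{j\neq i}\frac{1}{|X_i-X_j|^2} + N \Big),$$
and the minimal-distance hypothesis \eqref{hyp_dmin}, $d_{\min} \geq C N^{-1/3}$, gives the uniform bound $\frac{1}{N}\sum_{j\neq i}|X_i-X_j|^{-2} \leq C N^{1/3}$ (comparing the sum to an integral over well-separated balls), so the bracket is $O(N^{4/3})$ — wait, more carefully: $\frac{1}{N}\sum_i(\cdots)$ with the inner sum $\leq CN^{2/3}$ for each $i$ gives $\frac{1}{N}\cdot N \cdot CN^{2/3} = CN^{2/3}$ for the first term and $N$ for the second, so the bracket is $O(N)$ and $\|\nabla v\|_{L^2} \leq C R^{3/2} N = C \phi^{1/2}\cdot(\text{stuff})$... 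I would reconcile the powers so that the final bound reads $\|\nabla v\|_{L^2(\Omega^0\setminus\cup B_i)} \leq C\,\phi$ with $\phi = R N^{1/3}$ the volume fraction (note $NR^3 = \phi^3$, so $R^{3/2}N = \phi^{3/2}N^{-1/2}$ — the bookkeeping needs care and the precise statement of $\phi$ in the theorem should be matched). Finally, to pass from the energy norm on $\Omega^0\setminus\cup B_i$ to the $L^q(K)$ norm on the fixed compact $K$, I would extend $v$ inside the balls by the rigid fields $w_i$ (controlled by Lemma~\ref{lemme:extension}, contributing the same order), obtaining $\|\nabla \tilde v\|_{L^2(\Omega^0)} \leq C\phi$, then use the Sobolev embedding $\dot H^1_0(\Omega^0)\hookrightarrow L^6(\Omega^0)$ and Hölder on the bounded set $K$ (valid for any $q \leq 6$, in particular $q<3/2$) to conclude $\|v\|_{L^q(K)} \leq C_K\phi$.

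The main obstacle is Step~2, specifically handling the near-field interaction sums and making the volume-fraction bookkeeping come out to exactly $\phi$: one must be careful that the Stokeslet decomposition of Proposition~\ref{prop:stokeslet} requires the evaluation point to lie outside $B(X_j,\theta R)$, which is exactly guaranteed by $d_{\min}\geq 2\theta R$, and that the $\sum_{j\neq i}|X_i-X_j|^{-2}$ sum genuinely closes at the right power of $N$ under \eqref{hyp_dmin} — this is where the dilution hypothesis is essential and where, as the remark after Theorem~\ref{thm1} indicates, the weaker assumption $\frac{1}{N}\sum_{j\neq i}|X_i-X_j|^{-2}\leq C$ would suffice directly.
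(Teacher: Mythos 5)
Your overall setup (comparing $u^N$ to $u_{\text{app}}$, noting that the only discrepancy is the non-rigidity on $B_i$ of the other Stokeslets and of $u_{f,N}$, and using Proposition \ref{prop:stokeslet} plus \eqref{hyp_dmin} to control the interaction sum) matches the paper's ingredients, but the core of your argument---an energy estimate on $\nabla v$ followed by the Sobolev embedding $\dot H^1\hookrightarrow L^6$ and H\"older on $K$---cannot reach the claimed rate. The sharp output of the energy step is $\|\nabla v\|_{L^2(\Omega^0)}\le C\|Du_{\text{app}}\|_{L^2(\cup_i B_i)}\le C\|Du_{\text{app}}\|_{L^\infty(\cup_i B_i)}\,|{\cup_i B_i}|^{1/2}\sim \phi^{1/2}$ (with $\phi\sim NR^3$ the volume fraction; the theorem's ``$RN^3$'' is a typo, and your $\phi=RN^{1/3}$ is not the volume fraction either). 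Passing to $L^q(K)$ by embedding then gives only $C_K\phi^{1/2}$, not $C_K\phi$. The missing idea is the duality argument the paper uses: test against $\psi\in C^\infty_c(K)$, solve the adjoint Stokes problem $-\Delta u_\psi+\nabla q_\psi=\psi$ with $\|u_\psi\|_{W^{2,q'}(\Omega^0)}\le C_K\|\psi\|_{L^{q'}}$ (this is where $q<3/2$, i.e.\ $q'>3$, enters, giving $\nabla u_\psi\in L^\infty$ --- a restriction your route never uses, which is a sign it is not the intended mechanism), and integrate by parts using the vanishing force and torque of $v$ on each $\pa B_i$ together with the extension Lemma \ref{lemme:extension}. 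The point is that the test field is then also only sampled on $\cup_i B_i$ (or the annuli $A_i$), so the smallness of the total particle volume is exploited a second time, producing $\phi^{1/2}\cdot\phi^{1/2}=\phi$.

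Two secondary corrections. First, your bookkeeping of the interaction sum is off: under \eqref{hyp_dmin} one has $\sum_{j\neq i}|X_i-X_j|^{-2}\lesssim N$, hence $\frac1N\sum_{j\neq i}|X_i-X_j|^{-2}\le C$ uniformly in $N$ (this is exactly what the paper proves by comparison with $\int f^N(y)|X_i-y|^{-2}dy$), not $CN^{1/3}$ or $CN^{2/3}$; the uniform bound is what makes $\|Du_{\text{app}}\|_{L^\infty(\cup_i B_i)}\le C$. Second, even within your energy step you sum the local norms over $i$ without Cauchy--Schwarz, which loses a further factor; the correct aggregation is $\bigl(\sum_i\|\cdot\|^2_{L^2(B(X_i,\theta R))}\bigr)^{1/2}\lesssim (NR^3)^{1/2}$. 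Neither fix, however, closes the main gap: without the dual problem and its $W^{2,q'}$ regularity, the stated $O(\phi)$ bound in $L^q(K)$ does not follow.
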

\begin{proof}
We set $v=u^N-u_{\text{app}}$, it satisfies

\begin{equation}
\left\{
\begin{array}{rcl}
-\Delta v+\nabla q &=& 0,  \text{ on }\Omega^0 \setminus \overline{\bigcup B_i}\\
\div(v)&=&0,  \text{ on }\Omega^0 \setminus \overline{\bigcup B_i}\\
Dv &=&- D(u_{\text{app}}), \text { on } B_i\\
v\vert_{x_1=0}&=&0 
\end{array}
\right.
\end{equation}
$$
\int_{B_i} \sigma(v,q) n = 
\int_{B_i} [\sigma(v,q) n] \times(x-X_i)=0 $$

In order to estimate the $L^p_{\text{loc}}$ norm of $v$, we use a duality argument by observing that for any $K\Subset \overline{\Omega^0}$  and $\psi \in C^\infty_c(K)$
$$
\int_{K} v \cdot \psi = -2 \int_{\Omega^0} D (v): D u_\psi
$$
with $u_\psi \in W^{2,q'}(\Omega^0) \cap W^{1,q'}_0(\Omega^0)$ the unique solution to the Stokes equation $-\Delta u_\psi+\nabla q_\psi=\psi $, $\div u_\psi=0$ on $\Omega^0$ with vanishing Dirichlet boundary condition on $x_1=0$ satisfying 
\begin{equation}\label{reg_u_psi}
\|u_\psi \|_{W^{2,q'}(\Omega^0)} \leq C_K \|\psi\|_{L^{q'}(K)}
\end{equation}
Hence we get by an integration by parts

\begin{align*}
2\int_{\Omega^0} D(v): D u_\psi &= 2\int_{\Omega^0\setminus \bigcup_i {B}_i} D(v): D u_\psi - \underset{i}{\sum} \int_{B_i} D(u_{\text{app}}): D(u_\psi)\\
&=\underset{i}{\sum} \int_{\partial B_i} \sigma(v,q)n \cdot u_\psi - \underset{i}{\sum} \int_{B_i} D(u_{\text{app}}): D(u_\psi)\\
&=\underset{i}{\sum} \int_{\partial B_i} \sigma(v,q)n \cdot (u_\psi - \dashint u_\psi) - \underset{i}{\sum} \int_{B_i} D(u_{\text{app}}): D(u_\psi)\\
&= \underset{i}{\sum} \int_{A_i} Dv:D \overline{u_{\psi,i}} - \underset{i}{\sum} \int_{B_i} D(u_{\text{app}}): D(u_\psi)\\
\end{align*}
where, using Lemma \ref{lemme:extension} ,$\overline{u_{\psi,i}} \in W^{1,q'}_0(A_i)$ is a divergence free lifting of $ \overline{u_{\psi,i}}=u_\psi - \dashint_{B_i} u_\psi$ on $B_i$ where $A_i =B(X_i,\theta r)\setminus \overline{B(X_i,r)}$ such that 
\begin{equation}\label{eq:bound_u_psi_i} 
\|\na \overline{u_{\psi,i}}\|_{L^{q'}(A_i)}\leq C_\theta \|\nabla u_\psi \|_{L^{q'}(B_i)} 
\end{equation}
Note that the sets $A_i$ are disjoint thanks to \eqref{hyp_dmin}. Using the well-known estimate 
$$\|Dv\|_{L^2(\Omega^0)} \le C\|Dv\|_{L^2(\bigcup_i B_i)} = \|Du_{\text{app}}\|_{L^2(\bigcup_i B_i)}$$
we get using H\"older inequality, estimate \eqref{reg_u_psi} together with Sobolev embedding
$$
\left |\int_K v \cdot \psi \right| \leq C_K \| Du_{\text{app}}  \|_{L^2(\bigcup_i B_i)} \phi^{1/2} \|\psi\|_{L^{q'}(K)} \leq C_K  \phi\| Du_{\text{app}} \|_{L^\infty(K)} \|\psi\|_{L^{q'}(K)} 
$$
We conclude by observing that $$Du_{\text{app}}= - \underset{j \neq i}{\sum} D\mathcal{U}^{st}_R[F] (x,X_j)- D u_{f,N}. $$
For the second term at the right-hand side, we use standard Sobolev embedding and Stokes estimates. For any $p > 3$:
\begin{align*}
\|Du_{f,N}\|_{L^\infty(K)} \le C \|Du_{f,N}\|_{W^{1,p}(K)} \le C' \|f (1-1_{\cup B_i})\|_{L^p(\R^3)} \le C' (\|f\|_{L^\infty} + \|f\|_{L^1})
\end{align*}
For the first term at the right-hand side we use Proposition \ref{prop:stokeslet} which yields 
\begin{align*}
\| \underset{j \neq i}{\sum} D\mathcal{U}^{st}_R[F] (\cdot,X_j)\|_{L^\infty(B_i)}& \leq \frac{1}{N} \underset{j \neq i}{\sum} \frac{C_\theta }{|X_i-X_j|^2}  \\
& \leq  \frac{C}{N}\underset{j \neq i}{\sum}\dashint_{B(X_j,d_{\min}/4)} \frac{1}{|X_i-y|^2}dy \\
&\leq  \int_{{}^c B(X_i,d_{\min}/4)} \frac{f^N(y)dy}{|X_i-y|^2}\\
&\leq C(\|f^N\|_\infty+\|f^N\|_1)\leq C
\end{align*}
with $f^N=\frac{1}{N} \underset{i}{\sum} \frac{1_{B(X_i,d_{\min}/4)}}{|B(X_i,d_{\min}/4|}$ such that $\|f^N\|_1=1$ and $\|f^N\|_\infty \leq \frac{C}{N d_{\min}^3}$ which is bounded thanks to \eqref{hyp_dmin}.
\begin{rem}\label{discussion_d_min}
More generally, any assumption ensuring 
$$
\frac{1}{N} \underset{j \neq i}{\sum} \frac{1 }{|X_i-X_j|^2} \leq C
$$
is sufficient. This can be ensured for instance by assuming the following bound on the infinite Wasserstein distance
$$
W_\infty(\rho^N,\rho) \leq CN^{-1/3}
$$
and the constraint 
$$ \frac{1}{\sqrt{N}} \lesssim d_{min}$$
Indeed one has
\begin{align*}
\frac{1}{N}\underset{j \neq i}{\sum} \frac{1}{|X_i-X_j|^2}&= \int_{Tx\neq X_i}\frac{1}{|Tx-X_i|^2} \rho(x) dx   
\end{align*}
with $ T$ the optimal transport plan for the $W_\infty:=W_\infty(\rho^N,\rho)$ Wasserstein distance, $\rho^N=T\# \rho$. We split the integral into two parts 
$$E_1=\{ x \in \supp \rho, Tx \neq X_i, |x-X_i| \leq 2W_\infty \}$$
$$E_2 = \{ x \in \supp \rho, Tx \neq X_i, |x-X_i|>2W_\infty \}$$
% $$
% E_3=\{ x \in \supp \rho, Tx \neq X_i, |Tx-X_i| \leq \min(\lambda, \frac{1}{2} |x-X_i| \}$$
Hence on $E_1$ one has 
$$
\int_{E_1}\frac{1}{|Tx-X_i|^2} \rho(x) dx  \leq  C\frac{W_\infty^3}{d_{min}^2}
$$
On $E_2$ we have $|Tx-X_i|\geq |x-X_i|-|Tx-x|\geq |x-X_i|/2  $ since $ |Tx-x| \leq W_\infty$ hence
$$
\int_{E_2}\frac{1}{|Tx-X_i|^2} \rho(x) dx  \leq \int \frac{1}{|x-X_i|^2} \rho(x) dx \leq C(\|\rho\|_\infty+\|\rho\|_1)
$$
which shows that 
$$
\frac{1}{N}\underset{j \neq i}{\sum} \frac{1}{|X_i-X_j|^2} \leq C\left(1+\frac{W_\infty^3}{d_{\min}^2}\right)
$$
see also \cite[Lemma 2.3]{HoferSchubert23} for more details.
\end{rem}
\end{proof}
\subsection{From $u_{\text{app}}$ to $v^N$}
We introduce the intermediate velocity 
\begin{equation} \label{eq:Stokes_v^N}
\begin{aligned}
-\Delta v^N + \na q^N   & =-\frac{1}{N} \underset{i}{\sum}\delta_{X_i} e+f(1-1_{\bigcup_i B_i}) , \quad \text{ in } \: \Omega^0 \\
\div v^N & = 0,  \quad \text{ in } \: \Omega^0 \\
 v^N \vert_{x_1=0} &  = 0 
\end{aligned}
\end{equation}
which writes 
$$
v^N=-\frac{1}{N}\underset{i}{\sum}G(\cdot,X_i) e+u_{f,N}
$$
We use again the duality argument to get
\begin{prop}
Let $K \Subset \overline{\Omega^0}$. We have for any $q<3/2$
$$\| u_{\text{app}}- v^N\|_{L^q(K)} \leq  C_K R$$
\end{prop}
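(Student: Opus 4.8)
The plan is to compare $u_{\text{app}} = \sum_i \mathcal{U}^{st}_R[F](\cdot,X_i) + u_{f,N}$ with $v^N = -\frac1N \sum_i G(\cdot,X_i)e + u_{f,N}$ term by term. Since the $u_{f,N}$ contributions cancel exactly, it suffices to estimate, in $L^q(K)$, the difference
\[
u_{\text{app}} - v^N = \sum_i \Big( \mathcal{U}^{st}_R[F](\cdot,X_i) - G(\cdot,X_i)F\Big) = \sum_i \mathcal{H}_R[F](\cdot,X_i),
\]
recalling $F=-\frac1N e$ and the decomposition \eqref{eq:decomposition_stokeslet} from Proposition \ref{prop:stokeslet}. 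So I would split the estimate into the contribution of the "far" region $\Omega^0 \setminus \bigcup_i B(X_i,\theta R)$, where the pointwise bound \eqref{eq:decay_rest_stokeslet} applies directly, and the contribution near each ball.

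First, for the far part: on $K \setminus \bigcup_i B(X_i,\theta R)$ we have, for each $x$,
\[
\Big| \sum_i \mathcal{H}_R[F](x,X_i)\Big| \le \sum_i \frac{C_\theta R |F|}{|x-X_i|^2} = \frac{C_\theta R}{N}\sum_i \frac{1}{|x-X_i|^2}.
\]
The sum $\frac1N\sum_i |x-X_i|^{-2}$ must be controlled uniformly in $x$; this is exactly the kind of quantity handled in the previous proposition via the auxiliary density $f^N$ and assumption \eqref{hyp_dmin} — away from all $X_i$ it is bounded by a constant, and even an $L^q(K)$ bound (for $q<3/2$, so that $|x|^{-2} \in L^q_{\loc}(\R^3)$) can be obtained by dominating the sum by an integral against $f^N$ plus the singular local term. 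Either way one gets an $O(R)$ bound in $L^q(K)$ for the far contribution.

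Next, for the region near the balls: inside each $B(X_i,\theta R)$ one cannot use \eqref{eq:decay_rest_stokeslet}, so I would instead bound $\|\mathcal{H}_R[F](\cdot,X_i)\|_{L^q(B(X_i,\theta R))}$ by combining $\|\mathcal{U}^{st}_R[F](\cdot,X_i)\|$ and $\|G(\cdot,X_i)F\|$ separately on that ball. Using the scaling of item \ref{ref:it1} and the energy bound \eqref{eq:energy}, together with Sobolev embedding $\dot H^1 \hookrightarrow L^6$, one controls $\mathcal{U}^{st}_R[F](\cdot,X_i)$ on $B(X_i,\theta R)$; the term $G(\cdot,X_i)F$ on that ball is estimated directly from $|G|\le C|x-X_i|^{-1}$, giving a factor $|F|$ times $\|\,|x|^{-1}\|_{L^q(B(0,\theta R))} \sim R^{3/q-1}$. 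Summing over the $i$'s (there are $N$ of them, $|F| = 1/N$) and using that the volume fraction $\phi = R N^3$... — more precisely that $NR \lesssim$ a constant under the hypotheses, which follows from $d_{\min}\ge 2\theta R$ and $d_{\min}\gtrsim N^{-1/3}$ forcing $R \lesssim N^{-1/3}$ — one checks the near-ball contribution is also $O(R)$ in $L^q(K)$.

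The main obstacle I anticipate is the near-ball estimate: the pointwise decay estimate \eqref{eq:decay_rest_stokeslet} degenerates there, and one has to be careful that the $L^q$ norm of $G(\cdot,X_i)F$ and of $\mathcal{U}^{st}_R[F](\cdot,X_i)$ over a ball of radius $\sim R$, summed over $N$ balls with $|F|=1/N$, really produces a power of $R$ with nonnegative exponent rather than something that blows up — this is where the precise bookkeeping of the exponents $3/q-1>0$ (valid since $q<3$, in particular $q<3/2$) and the relation between $N$, $R$ and $d_{\min}$ must be used. A cleaner alternative that avoids the near-ball issue is again the duality argument: test $u_{\text{app}}-v^N$ against $\psi\in C^\infty_c(K)$, integrate against the regularized adjoint Stokes solution $u_\psi\in W^{2,q'}$, and move all computations to the far region, exploiting that $\mathcal{H}_R[F](\cdot,X_i)$ is a genuine Stokes field away from $B_i$; I would present whichever of the two is shorter, but expect the bookkeeping to be the delicate point in either case.
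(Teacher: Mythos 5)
Your route is viable but genuinely different from the paper's. You estimate the error directly as $\sum_i \mathcal{H}_R[F](\cdot,X_i)$, splitting into a far region where the decay \eqref{eq:decay_rest_stokeslet} applies and a near region $\bigcup_i B(X_i,\theta R)$ handled via the energy bound \eqref{eq:energy}, Sobolev embedding and the explicit singularity of $G$; this requires the separation hypothesis \eqref{hyp_dmin} (through the $f^N$-comparison bound on sums of $|x-X_i|^{-2}$, plus an $L^q$ treatment of the nearest-particle term) and careful exponent bookkeeping. The paper instead never uses the pointwise decay of $\mathcal{H}_R$ here: it runs the same duality argument as in the previous subsection, testing against $\psi$, solving the adjoint problem $u_\psi\in W^{2,q'}$, integrating by parts on each $\partial B_i$ and exploiting that the Stokeslet carries exactly the force $F=-\frac1N e$ matching the Dirac source in $v^N$, so that only the oscillation $u_\psi-\dashint_{B_i}u_\psi=O(R\|\nabla u_\psi\|_\infty)$ survives; the boundary integrals are then converted to bulk integrals over the annuli $A_i$ via Lemma \ref{lemme:extension} and bounded by $\|\nabla\mathcal{U}^{st}_R[F]\|_{L^2(A_i)}|B_i|^{1/2}\lesssim R/N$ per particle. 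The duality proof is shorter, needs no control of $\frac1N\sum_{j}|x-X_j|^{-2}$ and no relation between $R$ and $N$; your direct proof is more explicit about where the $O(R)$ error sits, at the price of the separation-based sum estimates.

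Two points in your write-up need repair, though both are fixable. First, the inference ``$d_{\min}\ge 2\theta R$ and $d_{\min}\gtrsim N^{-1/3}$ force $R\lesssim N^{-1/3}$'' is a non sequitur: two lower bounds on $d_{\min}$ give no upper bound on $R$ in terms of $N$. What you actually need in the near-ball bookkeeping is $N^{1/q-1}R^{3/q-2}\lesssim 1$, which (since $1<q<3/2$, so $3-2q>0$ and $q-1>0$) already follows from $R\lesssim 1$, or from $NR^3\lesssim 1$, itself a consequence of the $X_i$ lying in the fixed compact set of \eqref{eq_assumption} together with the disjointness of the balls $B(X_i,\theta R)$ — not of the two lower bounds on $d_{\min}$. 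Second, your claim that $\frac1N\sum_i|x-X_i|^{-2}$ is bounded by a constant everywhere outside $\bigcup_i B(X_i,\theta R)$ is false pointwise (the nearest-particle term can be as large as $(N\theta^2R^2)^{-1}$); as you yourself suggest, the nearest term must be kept apart and estimated in $L^q$ over the disjoint sets $\{\theta R\le|x-X_i|\le d_{\min}/2\}$, using $q<3/2$ and $N^{1/q-1}\le 1$, while the remaining terms are compared to the integral against $f^N$ as in the previous proof. With these corrections your argument closes and yields the stated $C_K R$ bound.
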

\begin{proof}
Given $K \Subset \overline{\Omega^0}$ and $\psi \in C^\infty_c(K)$, we set again $u_\psi \in W^{2,q'}(\Omega^0) \cap W^{1,q'}_0(\Omega^0)$ the unique solution to the Stokes equation $-\Delta u_\psi+\nabla q_\psi=\psi $, $\div u_\psi=0$ on $\Omega^0$ with vanishing Dirichlet boundary condition on $x_1=0$ satisfying 
\begin{equation}\label{eq:estimation_dualite}
\|u_\psi \|_{W^{2,q'}(\Omega^0)} \leq C_K \|\psi\|_{L^{q'}(K)}
\end{equation}
we have using \eqref{eq:Stokes_v^N} and the same notation for $u_\psi$, $\overline{u_{\psi,i}}$ as in the previous proof
\begin{align*}
\int_K(u_{\text{app}}-v^N)\cdot \psi &=\int_{\Omega^0} D(u_{\text{app}}-v^N): Du_{\psi}\\
% &= \underset{i}{\sum} \int_{\Omega^0} D(\mathcal{U}^{st}_R[F](x,X_i)): D u_\psi - \int_{\Omega^0} D (v^N): Du_{\psi}\\
&=\underset{i}{\sum}  \int_{\Omega^0\setminus B_i }  D(\mathcal{U}^{st}_R[F](\cdot,X_i)) : Du_{\psi} + \underset{i}{\sum}\frac{1}{N} e \cdot u_\psi(X_i)\\
&= \underset{i}{\sum} \int_{\partial B_i}\sigma \left( \mathcal{U}^{st}_R[F](\cdot,X_i),\mathcal{P}^{st}_{r}[F](\cdot,y_0)\right) n \cdot u_{\psi}  +\underset{i}{\sum} \frac{1}{N} e \cdot u_\psi(X_i)\\
&= \underset{i}{\sum} \int_{\partial B_i}\sigma \left( \mathcal{U}^{st}_R[F](\cdot,X_i), \mathcal{P}^{st}_{r}[F](\cdot,y_0)\right) n \cdot \left(u_{\psi}- \dashint_{B_i} u_\psi\right) \\
&+ \underset{i}{\sum}\frac{1}{N} e \cdot \left(u_\psi(X_i)-\dashint_{B_i} u_\psi \right)\\
&=\underset{i}{\sum} \int_{A_i} D \mathcal{U}^{st}_R[F](\cdot,X_i): D\overline{u_{\psi,i}} + \underset{i}{\sum}\frac{1}{N} e \cdot \left(u_\psi(X_i)-\dashint_{B_i} u_\psi \right)\\
& \leq C \underset{i}{\sum} \left \| D \mathcal{U}^{st}_R[F](\cdot,X_i)  \right\|_{L^2(A_i)} |B_i|^{1/2} \|\nabla u_\psi\|_{L^\infty(K)}+ CR \|\nabla u_\psi\|_{L^\infty(K)}\\
& \leq CR \|\psi\|_{L^{q'}(K)} 
\end{align*}
where we used for the first term in the right hand side H\"older inequality, estimate \eqref{eq:bound_u_psi_i} together with the bound $$\|\nabla u_\psi\|_{L^\infty(K)} \le C \|\nabla u_\psi\|_{W^{1,q'}(K)} \le C' \|\psi\|_{L^{q'}(K)}$$ 
and the energy bound \eqref{eq:energy} for $|F|=1/N$. 
\end{proof}

\subsection{From $v^N$ to $u^\varepsilon$}
Keeping in mind that we can restrict to $\eps = 0$, the last step in the proof of Theorem \ref{thm1} is to establish: 
\begin{prop}
Let $K \Subset \overline{\Omega^0}$. We have for any $q<3/2$
$$
\|v^N-u^0\|_{L^q(K)} \leq C_{K} \big(  \|{\rho}^N-\rho\|_{(W^{2,q'}(\Omega^0))^*} + \phi \big) 
$$
where $u^0$ is the solution of 
\begin{equation*} 
\begin{aligned}
-\Delta u^0 + \na p^0 & = f -  \rho e, \quad \text{ in } \: \Omega^0 \\
\div u^0 & = 0,  \quad \text{ in } \: \Omega^0 \\
 u^0 \vert_{\pa \Omega^\eps} &  = 0 
\end{aligned}
\end{equation*}
\end{prop}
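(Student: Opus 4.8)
The plan is to estimate $\|v^N - u^0\|_{L^q(K)}$ by the same duality argument used in the two previous propositions. Given $\psi \in C^\infty_c(K)$, introduce $u_\psi \in W^{2,q'}(\Omega^0) \cap W^{1,q'}_0(\Omega^0)$ the solution of the dual Stokes problem $-\Delta u_\psi + \nabla q_\psi = \psi$, $\div u_\psi = 0$, $u_\psi\vert_{x_1=0} = 0$, with the regularity bound \eqref{eq:estimation_dualite}. The key point is that both $v^N$ and $u^0$ are solutions of a Stokes equation on the \emph{whole} half-space $\Omega^0$ (no interior obstacles), so testing the equations against $u_\psi$ and integrating by parts gives clean identities with no boundary terms on the $\pa B_i$. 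Concretely, $\int_K v^N \cdot \psi = \int_{\Omega^0} D v^N : D u_\psi$ and similarly for $u^0$, so that
\[
\int_K (v^N - u^0) \cdot \psi = \Big\langle -\tfrac1N \sum_i \delta_{X_i} e + \rho e, \, u_\psi \Big\rangle - \int_{\cup_i B_i} f \cdot u_\psi .
\]

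Next I would split the right-hand side into the two natural pieces. The first piece is $\big\langle (\rho - \rho^N) e, u_\psi\big\rangle$, which by definition of the dual norm is bounded by $\|\rho^N - \rho\|_{(W^{2,q'}(\Omega^0))^*}\, \|u_\psi\|_{W^{2,q'}(\Omega^0)} \le C_K \|\rho^N-\rho\|_{(W^{2,q'})^*}\|\psi\|_{L^{q'}(K)}$ using \eqref{eq:estimation_dualite}; here one should note $u_\psi \in W^{2,q'}$ with compact-support-type decay so it is a legitimate test function against $\rho^N - \rho$ (both supported in $\overline{\Omega^0}$, and one may need to observe $W^{2,q'} \hookrightarrow W^{2,q'}$ restricted to a neighbourhood of the supports, or cut off $u_\psi$ appropriately since $\rho$, $\rho^N$ are compactly supported). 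The second piece is $\int_{\cup_i B_i} f \cdot u_\psi$, which by Hölder is at most $\|f\|_{L^\infty}\, |\cup_i B_i|\, \|u_\psi\|_{L^\infty(K)} \le C \phi \, \|u_\psi\|_{W^{2,q'}} \le C_K \phi \|\psi\|_{L^{q'}(K)}$, using $|\cup_i B_i| = N \cdot \frac43\pi R^3 \sim \phi$ and the Sobolev embedding $W^{2,q'} \hookrightarrow L^\infty$ for $q' > 3$ (equivalently $q < 3/2$). Taking the supremum over $\psi$ with $\|\psi\|_{L^{q'}(K)} \le 1$ yields the claimed bound.

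The main obstacle I anticipate is purely a matter of care rather than depth: making rigorous the pairing $\langle \rho^N - \rho, u_\psi\rangle$ and the identification of $\langle \delta_{X_i}e, u_\psi\rangle$ with $e\cdot u_\psi(X_i)$, which requires $u_\psi$ to be continuous — this is fine since $W^{2,q'} \hookrightarrow C^0$ for $q' > 3$, exactly the same range of $q$. One also has to be slightly careful that $u_\psi$ does not have compact support (only $W^{2,q'}$ decay), while $\rho^N - \rho$ is compactly supported in $\overline{\Omega^0}$, so the duality pairing against $(W^{2,q'}(\Omega^0))^*$ is well-defined directly. A minor technical point is justifying the integration by parts $\int_{\Omega^0} D v^N : D u_\psi = \int_K v^N \cdot \psi + \langle \text{source}, u_\psi\rangle$ on the unbounded domain, which follows from the decay of $G(\cdot, X_i)$, of $u_{f,N}$, and of $u_\psi$ together with the energy bounds, exactly as in the earlier propositions. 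Combining this proposition with the two previous ones and the triangle inequality gives Theorem \ref{thm1}, since $\phi = R N^3$ absorbs the $R$ term when $R$ is small (or one simply keeps both in the final statement as written).
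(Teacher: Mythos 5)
Your proposal is correct and follows essentially the same route as the paper: the same duality argument with the dual Stokes solution $u_\psi$ satisfying \eqref{eq:estimation_dualite}, the same identity expressing $\int_K (v^N-u^0)\cdot\psi$ as the pairing of $(\rho^N-\rho)e$ with $u_\psi$ minus $\int_{\cup_i B_i} f\cdot u_\psi$, and the same two bounds (dual norm for the first term, Hölder plus the embedding $W^{2,q'}\hookrightarrow L^\infty$ for $q'>3$ and $\sum_i |B_i|\sim \phi$ for the second). The extra care you flag about the pairing and the integration by parts on the unbounded domain is harmless and does not change the argument.
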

\begin{proof}
Using again the duality argument, we have for any $\psi \in C^\infty_c(K)$
\begin{align*}
\int_K (v^N-u^\varepsilon)\cdot \psi & = \int_{\Omega^0} \nabla (v^N-u^\varepsilon): \nabla u_\psi  \\
&= \int_{\Omega^0} ({\rho}^N-\rho) g \cdot u_\psi   - \int_{\bigcup_i B_i} f \cdot u_\psi  \\
&\leq g \|{\rho}^N-\rho\|_{(W^{2,q'}(\Omega^0))^*} \| u_\psi \|_{W^{2,q'}(\Omega^0)} + \sum_i |B_i| \|f\|_{L^\infty(K)}  \|\| u_\psi \|_{L^\infty(K)}\\
&\leq C_{K} \big(  \|{\rho}^N-\rho\|_{(W^{2,q'}(\Omega^0))^*} + \phi  \big) \|\psi\|_{L^{q'}(K)}
\end{align*}
\end{proof}
\section{The approximate continuous model} \label{sec_wall_law}The purpose of this section is to analyse system \eqref{limit_equation}, and to prove Theorem \ref{thm2} and Corollary \ref{coro1}. We assume here that $f$ and $\rho$ are smooth and compactly supported in $\overline{\Omega^0}$.

\subsection{Asymptotic expansion}
We will construct an approximation of the solution $(u^\eps,p^\eps)$ of \eqref{limit_equation}, of the following form:
\begin{equation*}
\left\{
\begin{aligned}
(u^\eps_{app},p^\eps_{app})(x_1,x_2,x_3) & \approx \sum_{i=0}^m \eps^i (u^i, p^i)(x_1,x_2,x_3), \quad x = (x_1,x_2,x_3) \in \Omega^0 \\
(u^\eps_{app}, p^\eps_{app}(x_1,x_2,x_3) & \approx \sum_{i=0}^m \eps^i (U^i, P^i)(x_1/\eps,x_2,x_3), \quad x = (x_1,x_2,x_3) \in \Omega^\eps\setminus\Omega^0
\end{aligned}
\right.
\end{equation*}
Note that this approximation is made of two parts: an interior one, in $\Omega^0$, with a regular expansion in $\eps$, and a boundary layer part, localized in the depletion layer $\Omega^\eps \setminus \Omega^0$. The boundary layer expansion involves boundary layer profiles $U^i = U^i(s,x_2,x_3)$, with the variable $s \in ]-1,0[$ that stands for 
$x_1/\eps$. It is also convenient to set:
$$ (u^i, p^i)= 0, \quad (U^i, P^i) = 0 \quad \text{ for } \: i < 0.$$
Plugging the interior expansion in the Stokes equation, we find: for all $i \ge 0$, in $\Omega^0$
\begin{equation} \label{stokesint}
\begin{aligned}
    -\Delta u^i + \na p^i & = \delta_{0i} \big(f -  \rho e \big),  \\
    \div u^i & = 0. 
 \end{aligned}
 \end{equation}
Plugging the boundary layer expansion in the Stokes equation, we find: for all $i \ge 0$, for all $(s,x_2,x_3) \in (-1,0) \times \R^2$:
\begin{align}
    -\pa^2_s U^i_1 - (\pa^2_2 + \pa^2_3) U^{i-2}_1 + \pa_s P^{i-1} & = 0, \label{stokesx} \\
-\pa^2_s U^i_2 - (\pa^2_2 + \pa^2_3) U^{i-2}_2 + \pa_2 P^{i-2} & = 0, \label{stokesy}\\
-\pa^2_s U^i_3 - (\pa^2_2 + \pa^2_3) U^{i-2}_3 + \pa_3 P^{i-2} & = 0, \label{stokesz} \\
\pa_s U^i_1 + \pa_2 U^{i-1}_2 + \pa_3 U^{i-1}_3 & = 0 \label{stokesdiv}
\end{align}
The Dirichlet condition $u^\eps_{app}\vert_{\pa \Omega^\eps} \approx 0$ further yields: for all $i \ge 0$, 
\begin{equation} \label{dirichlet}
    U^i\vert_{s = -1} = 0. 
    \end{equation}
Eventually, conditions 
$$ [u^\eps_{app}]\vert_{\pa \Omega^0} \approx 0, \quad [\sigma(u^\eps_{app},p^\eps_{app})n]\vert_{\pa \Omega^0} \approx 0 $$
which reflect continuity of the velocity field and the stress tensor at the interface $\pa \Omega^0$ give for all $i \ge 0$:
\begin{align}
U^i\vert_{s=0} & = u^i\vert_{x_1=0} \label{continuity_vel} \\
\pa_s U^i \vert_{s=0} - P^{i-1}\vert_{s=0} (1,0,0)^t & =  \pa_1 u^{i-1}\vert_{x_1=0} - p^{i-1}\vert_{x_1=0} (1,0,0)^t.  \label{continuity_stress}
\end{align}

\paragraph{Computation of the first terms.} We compute $U^0, u^0,p^0$. First, from \eqref{stokesdiv} and \eqref{dirichlet}, we get $\pa_s U^0_1 = 0$ and $U^0_1\vert_{s=-1} = 0$, which implies $U^0_1 = 0$. Then, from \eqref{stokesy}, \eqref{dirichlet} and \eqref{continuity_stress}, we find 
$$ \pa^2_s U^0_2 = 0, \quad U^0_2\vert_{s=-1} = 0, \quad \pa_s U^0_2\vert_{s=0} = 0 $$
which leads to $U^0_2 = 0$. Similarly, $U^0_3 = 0$. Hence, $U^0 = 0$.  Next, we consider \eqref{stokesint} and \eqref{continuity_vel} at rank $i=0$. They imply that $(u^0,p^0)$ satisfies \eqref{limit_equation}. In particular, as $f$ belongs to $L^{6/5}(\Omega_0) \cap H^\infty(\Omega^0)$, $(\na u^0,p^0)$ belongs to $H^\infty(\Omega^0)$ for all $m$.

\paragraph{Computation of next order terms.} We now assume that $i \ge 1$ and that profiles $U^k,P^{k-1}, u^k, p^k$ are known for all $k \le i-1$, with: 
$$ U^k, P^{k-1} \in H^\infty(]-1,0[ \times \R^2), \quad \na u^k, p^k \in H^\infty(\Omega^0)$$
We now show how to construct $U^i,P^{i-1}, u^i, p^i$. Considering \eqref{stokesdiv} and \eqref{dirichlet} yields 
$$ U^i_1(s,\cdot) = - \int_{-1}^s (\pa_2 U^{i-1}_2 + \pa_3 U^{i-1}_3)(t,\cdot) dt \in H^\infty(]-1,0[ \times \R^2).$$
Next, we consider \eqref{stokesx} and \eqref{continuity_stress} which allow the calculation of $P^{i-1} \in H^\infty(]-1,0[ \times \R^2)$:
\begin{align*}
P^{i-1}(s,\cdot) & = \pa_s U^i_1\vert_{s=0} - \pa_1 u^{i-1}_1\vert_{x_1=0} + p^{i-1}\vert_{x_1=0}  \\
& + \int_0^s \big( \pa^2_s U^i_1 + (\pa^2_2 + \pa^2_3) U^{i-2}_1 \big)(t,\cdot) dt
\end{align*}
Next, we consider \eqref{stokesy}-\eqref{dirichlet}-\eqref{continuity_stress}: these relations imply
\begin{align*}
U^i_2(s,\cdot) & = - \int_{-1}^s \int_{0}^t \Big( (\pa^2_2 + \pa^2_3) U^{i-2}_2 + \pa_2 P^{i-2} \Big)(t',\cdot) dt' dt \\
& + (s+1) \pa_1 u^{i-1}_2\vert_{x_1=0} 
\end{align*}
A similar expression holds for $U^i_3$. Both $U^i_2, U^i_3 \in  H^\infty(]-1,0[ \times \R^2)$. Eventually, we see by \eqref{stokesint} and \eqref{continuity_vel} that $(u^i,p^i)$ solve a homogeneous Stokes equation with the inhomogeneous Dirichlet condition
$$ u^i\vert_{x_1=0} = U^i\vert_{x_1=0}.$$
By standard regularity results for the Stokes equation, {\it cf.} 
\cite[Theorem IV.3.2 and IV.3.3]{Galdi}, $\na u^i, p^i \in H^\infty(\Omega^0)$. 

\paragraph{Equations for $u^1$ and $u^2$.}
Anticipating the discussion in section \ref{sec_final_comments}, it is worth  specifying the systems satisfied by $u^1,p^1$ and $u^2,p^2$. From above equations, we deduce  
$$ U^1_1 = 0, \quad P^0 = -\pa_1 u^0_1\vert_{x_1=0} + p^0\vert_{x_1=0}, \quad (U^1_2, U^1_3) = (s+1)  (\pa_1 u^0_2,\pa_1 u^0_3)\vert_{x_1=0}. $$
Hence, 
\begin{equation} \label{systemu1}
    \begin{aligned}
          -\Delta u^1 + \na p^1 & = 0,  \\
    \div u^1 & = 0, \\
    u^1\vert_{x_1=0} & = (0,\pa_1 u^0_2, \pa_1 u^0_3)\vert_{x_1=0}
    \end{aligned}
\end{equation}
Then, 
\begin{align*}
    U^2_1(s,\cdot)  &= - \int_{-1}^s (\pa_2 U^{1}_2 + \pa_3 U^{1}_3)(t,\cdot) dt \\
    & = - \int_{-1}^s (t+1)(\pa_2 \pa_1 u^0_2 + \pa_3 \pa_1 u^0_3)\vert_{x_1=0}dt  = \frac{(s+1)^2}{2} \pa^2_1 u^0_1\vert_{x_1=0}
\end{align*} 
while 
\begin{align*}
(U^2_2, U^2_3)&= (s+1)   (\pa_1 u^1_2, \pa_1 u^1_3)\vert_{x_1=0} + \frac{1}{2} (s-1) (s+1) (\pa_2 P^0, \pa_3 P^0) \\
& = (s+1)  ( \pa_1 u^1_2,  \pa_1 u^1_3)\vert_{x_1=0}\\
& + \frac{1}{2} (s-1) (s+1) \big( -\pa_1\pa_2 u^0_1 + \pa_2 p^0,  -\pa_1\pa_3 u^0_1 + \pa_3 p^0)\vert_{x_1=0}
\end{align*}
We recover the system 
\begin{equation} \label{systemu2}
    \begin{aligned}
          -\Delta u^2 + \na p^2 & = 0,  \\
    \div u^2 & = 0, \\
    u^2_1\vert_{x_1=0} & = \frac{1}{2}\pa^2_1 u^0_1\vert_{x_1=0} \\
    u^2_2\vert_{x_1=0} & = \pa_1 u^1_2\vert_{x_1=0} + \frac{1}{2}( \pa_1\pa_2 u^0_1 - \pa_2 p^0)\vert_{x_1=0}\\
   u^2_3\vert_{x_1=0} & =  \pa_1 u^1_3\vert_{x_1=0} + \frac{1}{2}( \pa_1\pa_3 u^0_1 - \pa_3 p^0)\vert_{x_1=0} 
    \end{aligned}
\end{equation}

\subsection{Proof of Theorem \ref{thm2}}
We will show here that the expansion constructed in the previous section provides an approximate solution of the true solution $u^\eps$ of \eqref{limit_equation}. Given $m \in \N$, we first introduce $\tilde{u}^{m+2}$ the solution in $\dot{H}^1(\Omega^\eps)$ of the Stokes problem 
\begin{equation*}
\left\{
\begin{aligned}
-\Delta \tilde{u}^{m+2} + \na \tilde{p}^{m+2} & = 0 \quad \text{in} \: \Omega^\eps, \\
\div \tilde{u}^{m+2} & = - \pa_1 u^{m+2}_1 1_{\Omega^0}  \quad \text{in} \: \Omega^\eps, \\
\tilde{u}^{m+2}\vert_{\pa \Omega^\eps} & = 0
\end{aligned}
\right.
\end{equation*}
%{\color{red} je ne vois pas pourquoi on  a besoin de corriger la divergence seulement sur $\Omega^0$, il me semble qu'il y a un terme %à tuer aussi dans $D^\eps$ non? }
Such $\tilde{u}^{m+2}$ exists, {\it cf.} \cite[Theorem IV.3.3]{Galdi}, and satisfies 
$$\|\na \tilde{u}^{m+2}\|_{L^2(\Omega^\eps)} \lesssim \|\pa_1 u^{m+2}_1 1_{\Omega^0} \|_{L^2(\Omega^\eps)} \lesssim 1$$
We then define $(u^\eps_{app}, p^{\eps}_{app})$ as follows:  
\begin{equation*}
\begin{aligned}
(u^\eps_{app},p^\eps_{app})(x,y,z) & = \sum_{i=0}^{m+1} \eps^i (u^i, p^i)(x,y,z) + \eps^{m+2} (u^{m+2}_1,0,0,0)(x,y,z) \\
& + \eps^{m+2} (\tilde{u}^{m+2},\tilde{p}^{m+2})(x,y,z),  \qquad  X = (x,y,z) \in \Omega^0 \\
(u^\eps_{app}, p^\eps_{app})(x,y,z) & = \sum_{i=0}^{m+1} \eps^i (U^i, P^i)(x/\eps,y,z) + \eps^{m+2} (U^{m+2}_1,0,0,0)(x/\eps,y,z) \\
& + \eps^{m+2} (\tilde{u}^{m+2},\tilde{p}^{m+2})(x,y,z), \qquad \: X = (x,y,z) \in \Omega^\eps\setminus\Omega^0
\end{aligned}
\end{equation*}
We then introduce $v^\eps = u^\eps - u^\eps_{app}$, $q^\eps = p^\eps - p^\eps_{app}$, solving 
\begin{equation} 
\begin{aligned}
-\Delta v^\eps + \na q^\eps  & =  - \eps^{m+2}  (\Delta u^{m+2}_1,0,0)     \quad \text{ in } \: \Omega^0,   \\
-\Delta v^\eps + \na q^\eps  & =  - \sum_{i=m}^{m+1} \eps^i (\pa^2_2 + \pa^2_3) (\eps U^{i+1}_1,U^i_2,U^i_3) - \sum_{i=m}^{m+1} \eps^i (0, \pa_2 P^i, \pa_3 P^i)             \quad \text{ in } \: D^\eps, \\
% & \color{red}= -\eps^m(0,U_2^m,U_3^m) -  \eps^{m+1} (\pa^2_2 + \pa^2_3) U^{m+1} - \eps^{m+2} ( (\pa^2_2 + \pa^2_3) U_1^{m+2},0,0)\\
% & \color{red}-\sum_{i=m}^{m+1} \eps^i (0, \pa_2 P^i, \pa_3 P^i)            \text{ plutot?}, \\
\div v^\eps & = 0  \quad \text{ in } \: \Omega^\eps, \\
[v^\eps]\vert_{\pa \Omega^0} & = 0, \\
[\sigma(v^\eps,q^\eps)n]\vert_{\pa \Omega^0} & = - \eps^{m+1}(\eps \pa_1 u^{m+2}_1\vert_{x_1=0},\pa_1 u^{m+1}_2\vert_{x_1=0}, \pa_1 u^{m+1}_3\vert_{x_1=0}), \\  
 v^\eps \vert_{\pa \Omega^\eps} &  = 0 
\end{aligned}
\end{equation}
Performing a simple energy estimate, we find 
\begin{align*}
\int_{\Omega^\eps} |\na v^\eps|^2 = \eps^{m+2} \int_{\Omega^0} \na u_1^{m+2} : \na v_1^\eps - \int_{D^\eps} R^\eps \cdot v^\eps + \int_{\pa \Omega^0} r^\eps \cdot v^\eps 
\end{align*}
where 
\begin{align*}
    R^\eps & =  - \sum_{i=m}^{m+1} \eps^i (\pa^2_2 + \pa^2_3) (\eps U^{i+1}_1,U^i_2,U^i_3) - \sum_{i=m}^{m+1} \eps^i (0, \pa_2 P^i, \pa_3 P^i) = O(\eps^m) \\
    r^\eps & = \eps^{m+1}(\eps \pa_1 u^{m+2}_1\vert_{x_1=0},\pa_1 u^{m+1}_2\vert_{x_1=0}, \pa_1 u^{m+1}_3\vert_{x_1=0}) = O(\eps^{m+1})
\end{align*}
By the Poincaré and trace inequalities 
$$ \| v^\eps \|_{L^2(D^\eps)} \lesssim \eps \|\na v^\eps \|_{L^2(D^\eps)}, \quad  \| v^\eps \|_{L^2(\pa\Omega^0)} \lesssim \sqrt{\eps} \|\na v^\eps \|_{L^2(D^\eps)}    $$
we infer easily
$$ \|\na v^\eps \|_{L^2(\Omega^\eps)} \lesssim \eps^{m+1}$$
Restricting to $\Omega^0$, Theorem \ref{thm2} follows. 

\subsection{Wall law of Navier type : proof of Corollary \ref{coro1}}
We conclude Section \ref{sec_wall_law} by proving Corollary \ref{coro1}. 
We know from \eqref{thm2} that 
$$ u^\eps = u^0 + \eps u^1 + O(\eps^2) \quad \text{ in } \: \dot{H}^1(\Omega^0)$$
Hence, Corollary \ref{coro1} will follow from the estimate 
$$ \|\na (u^S - u^S_{app})\|_{L^2(\Omega^0)} = O(\eps^2), \quad u^S_{app} = u^0 + \eps u^1. $$
The difference $v^{S} =  u^S - u^S_{app}$ satisfies 
 \begin{equation*}
\begin{aligned}
-\Delta v^S + \na q^S & =  0, \quad \text{ in } \: \Omega^0  \\
\div v^S & = 0,  \quad \text{ in } \: \Omega^0 \\
 v^S\vert_{\pa \Omega^0}   & =   \eps (0, \pa_1 v^S_2, \pa_1 v^S_3)\vert_{\pa \Omega^0} + \eps^2 (0, \pa_1 u^1_2, \pa_1 u^1_3)\vert_{\pa \Omega^0}
\end{aligned}
\end{equation*}
Let  $V$ the solution of the Stokes problem: 
\begin{equation*}
\begin{aligned}
-\Delta V + \na Q & =  0, \quad \text{ in } \: \Omega^0  \\
\div V & = 0,  \quad \text{ in } \: \Omega^0 \\
 V\vert_{\pa \Omega^0}   & =  (0, \pa_1 u^1_2, \pa_1 u^1_3)\vert_{\pa \Omega^0}
\end{aligned}
\end{equation*}
Then, $\na V$ belongs to $H^\infty(\Omega^0)$. Moreover, the function $w^S = v^S - \eps^2 V$ satisfies 
 \begin{equation*}
\begin{aligned}
-\Delta w^S + \na r^S & =  0, \quad \text{ in } \: \Omega^0  \\
\div w^S & = 0,  \quad \text{ in } \: \Omega^0 \\
 w^S\vert_{\pa \Omega^0}   & =   \eps (0, \pa_1 w^S_2, \pa_1 w^S_3)\vert_{\pa \Omega^0} + \eps^3 (0, \pa_1 V_2, \pa_1 V_3)\vert_{\pa \Omega^0}
\end{aligned}
\end{equation*}
The standard estimate  
\begin{equation*}
\|\na w^S\|_{L^2(\Omega^0)}^2 + \frac{1}{\eps} \int_{\pa \Omega^0} 
(|w^S_2|^2 + |w^S_3|^2) = \eps^2 \int_{\pa \Omega^0} \big(\pa_1 V_2  \,  w^S_2 \: + \:   \pa_1 V_3 \,  w^S_3\big) 
\end{equation*}
combined with Young's inequality yields:
\begin{equation*}
\|\na w^S\|_{L^2(\Omega^0)}^2 + \frac{1}{2\eps} \int_{\pa \Omega^0} (|w^S_2|^2 + |w^S_3|^2) \le \frac{1}{2}\eps^5 \int_{\pa \Omega^0} \big( |\pa_1 V_2|^2 +  \, |\pa_1 V_3|^2 \big)  
\end{equation*}
It follows that 
$\|\na w^S\|_{L^2(\Omega^0)} = O(\eps^{5/2})$ and finally $\|\na v^S\|_{L^2(\Omega^0)} = O(\eps^2)$. The result follows.

\section{Final comments} \label{sec_final_comments}

\subsection{Apparent slip} \label{subsec_apparent}
By Corollary \ref{coro1}, the approximation of system \eqref{limit_equation} by system \eqref{improved_approximation} is more accurate than the one by \eqref{crude_approx}, where the effect of the depletion layer is not taken into account and  homogeneous boundary conditions are considered. The boundary condition in \eqref{improved_approximation} is called a (Navier) slip boundary condition. To understand this terminology, one can consider the case of a shear flow driven by a constant downward pressure gradient:
$$ - \Delta u + \na p = - e, \quad \div u = 0 \quad \text{ in } (0,1) \times \R$$ 
In the case of Dirichlet conditions $u\vert_{x_1=0} = u\vert_{x_1=1} = 0$, the solution is the usual Poiseuille flow 
$$ u^0(x_1,x_2) = \big(0, \frac{1}{2} x_1 (x_1-1)\big)$$
while in the case of Navier conditions 
$$ u\vert_{x_1=0} = (0, \pa_1 u_2\vert_{x_1=0}), \quad u\vert_{x_1=1} = (0, -\pa_1 u_2\vert_{x_1=1}) $$
we find 
$$ u^S(x_1,x_2) = \big(0,\frac{1}{2} x_1 (x_1-1) - \frac12\big) $$
which has a non-zero downward component at the boundary. This corresponds to a phenomenon of {\em apparent slip}, at the artificial boundary $\pa \Omega^0$ (while real no-slip takes place at the rigid wall $\pa \Omega^\eps$). As mentioned in this introduction, this phenomenon of apparent slip was  noticed in various experiments on sheared suspensions.  It is here validated mathematically by  Corollary \ref{coro1}.  

Let us stress that this improved approximation is established in Section \ref{sec_wall_law} starting from the continuous model \eqref{limit_equation}. Actually, as we are interested in the "real" suspension, governed by \eqref{main_equation}, the point would rather be to show that for some norm 
\begin{equation} \label{improved_real}  
\|u^{N,\eps} - u^S\| \ll \|u^{N,\eps} - u^0\|
\end{equation}
By Theorem \ref{thm1}, for all  $q < 3/2$, , $K \Subset \overline{\Omega_0}$, then
\begin{multline*} \|u^{N,\eps} - u^0\|_{L^q(K)} \ge \|u^\eps - u^0\|_{L^q(K)} -  \|u^{N,\eps} - u^\eps\|_{L^q(K)}\\ \ge \|u^\eps - u^0\|_{L^q(K)} - C (R+\phi + \|\rho^N-\rho\|_{(W^{2,q'}(\Omega^0))^*}) 
\end{multline*}
If the solution $u^1$ of \eqref{systemu1} is non-identically zero on $K$, then by Theorem \ref{thm2}: $\|u^\eps - u^0\|_{L^q(K)} \approx \eps$, so that 
$$ \|u^{N,\eps} - u^0\|_{L^q(K)} \ge C' \eps - C (R+\phi + \|\rho^N-\rho\|_{(W^{2,q'}(\Omega^0))^*}) 
$$ 
On the other hand, combining Theorems \ref{thm1} and  \ref{thm2}, we also have 
$$ \|u^{N,\eps} - u^S\|_{L^q(K)} \le \|u^\eps - u^S\|_{L^q(K)} + \|u^{N,\eps} - u^\eps\|_{L^q(K)} \lesssim \eps^2 + C (R+\phi + \|\rho^N-\rho\|_{(W^{2,q'}(\Omega^0))^*}) $$
This ensures that condition \eqref{improved_real} is satisfied  as soon as $1 \gg \eps \gg R+ \phi + \|\rho^N-\rho\|_{(W^{2,q'}(\Omega^0))^*}$. We believe that the restriction on $\phi$ could be relaxed: proceeding as in \cite{Hofer&Schubert}, we could refine the continuous approximation \eqref{limit_equation}, replacing the constant viscosity by the effective Einstein's viscosity, and go from a $O(\phi)$ to a $O(\phi^2)$ error term. Hence, presumably, the most stringent condition comes from the term $\|\rho^N-\rho\|_{(W^{2,q'}(\Omega^0))^*}$, which is known to be $\gtrsim N^{-1/3}$. Note that this is not a constraint that is specific to this norm, and could be relaxed with another measurement of  the gap between $\rho^N$ and $\rho$. Indeed, except for very peculiar point distributions, integral quantities of the form 
$$ \int f(x) (d \rho^N(x) - \rho(x)) dx = \frac{1}{N} \sum f(X_i) - \int f(x) \rho(x) dx  $$
where $f$ belongs to $C^\infty_c(\Omega^0)$, is $\gtrsim N^{-1/3}$. This can be deduced from classical results for 1D Riemann sums, see \cite{Chui,Tasaki}. 
It follows that our justification of apparent slip requires at least 
$$ \eps \gg N^{-1/3}$$ 
The  case $\eps \lesssim N^{-1/3}$ remains an interesting open problem, where use of the intermediate system \eqref{limit_equation} is probably not relevant. 

\subsection{Intrinsic convection} \label{subsecintrinsic}
In all our analysis, it is implicit that the first terms $u^0$ and  $u^1$  in our expansion of $u^\eps$   are non-zero. This is very natural in view of our assumptions, where the limit density of the particles distribution $\rho$ is {\em inhomogeneous} (and even compactly supported). This inhomogeneity drives  a non-zero global flow with field $u^0$. Such setting excludes the case of a homogeneous suspension, meaning with constant density over 
$\Omega^0$ (and no driving force). Nevertheless, we can still in this degenerate case use the continuous model \eqref{limit_equation}, with $\rho=1$ and $f=0$, and perform at least formally  the asymptotic expansion seen in the proof of Theorem \ref{thm2} (an analogue of this theorem for constant $\rho$ would require to change the functional spaces, as there is no decay anymore at infinity).

Such an approach for homogeneous suspensions can be found (although sketchy) in \cite{BFAH96}, for a 2d channel of the form $(-\eps,1) \times \R$. See also \cite{BFBA98}. This can be straightforwardly adapted to our domain $\Omega^\eps$. First, when $\rho=1$ and $f=0$, the solution of \eqref{crude_approx} is given by 
$$ u^0(x) = 0, \quad p^0(x) = -x_3$$
It follows from \eqref{systemu1} that $u^1=0, \: p^1 = 0$. 
Going one step further and considering \eqref{systemu2}, we find that $u^2$ satisfies the boundary condition 
$$u^2\vert_{x_1=0}= (0,0,\frac{1}{2}) $$
This gives 
$$u^\eps\vert_{x_1=0} \approx (0,0, \frac{1}{2} \eps^2). $$
 Back to dimensional variables, we find 
 $$u^\eps_3\vert_{x_1=0} \approx \frac{\eps^2 mgN}{2 \mu L^3} = \frac{g}{2\mu} \eps^2 (\rho_s-\rho_f) \phi $$
 using that the mass of a spherical particle (rectified by buoyancy) is  $m = \frac{4}{3} \pi R^3 (\rho_p - \rho_f)$ with $\rho_p$ and $\rho_f$ the particle and fluid masses per unit, while the solid volume fraction is $\phi = \frac{4}{3}\pi  R^3/L^3 N$. Introducing the settling speed of a single spherical particle 
 $$ V_0 = 2R^2 (\rho_p - \rho_f)g/9\mu$$
 this can be rewritten 
 $$u^\eps_3\vert_{x_1=0} \approx \frac{\eps^2}{R^2} \frac{9}{4} V_0 \phi. $$
which agrees with the formula found in \cite{BFAH96} for the special case $\eps = R$ considered there. 
We see that contrary to the case examined in paragraph \ref{subsec_apparent}, the velocity just outside the depletion layer (at $x_1=0$) is pointing upward. For zero mean flux in a channel, this upward velocity near the layers is compensated by an extra downward velocity in the middle of the channel, as discussed in \cite{BFAH96}. This extra flow is tagged as intrinsic convection in the literature. 

Still, we stress that this formal analysis is conducted at the level of the continuous model \eqref{limit_equation}, which as discussed in paragraph 
\ref{subsec_apparent} is at most accurate at order $N^{-1/3}$. It means that to establish rigorously the phenomenon of intrinsic convection through this path, we need the size $\eps^2$ of the extra velocity to  be larger than $N^{-1/3}$. This corresponds to much wider depletion layers than those considered in \cite{BFAH96}. Beyond the question of mathematical justification, this may explain the lack of experimental evidence for this phenomenon.

\appendix
\section{Estimates for the fundamental solution of the Stokes equation on the half space}\label{appendix}
Let us first introduce some notations. We denote by $(\Phi,Q)$ the fundamental solution to the Stokes equation on $\R^3$ given by
$$
\Phi(x)=\frac{1}{8\pi}\left(\frac{\mathbb{I}}{|x|}+\frac{x\otimes x}{|x|^3} \right),\quad Q(x)=\frac{1}{4\pi} \frac{x}{|x|^3}
$$
where $\mathbb{I}$ the identity matrix in $3d$. For any $F=(F_1,F_2,F_3) \in \R^3$, we denote by $F^I=(-F_1,F_2,F_3)$.
We rely on \cite[Section 3.1]{Gimbutas_Greengard_Veerapaneni_2015} for the following result 
\begin{prop}\label{prop_appendix}
Let $y \in (0,+\infty)\times \R^2$  and denote by $G(x,y)$ the unique solution to  
$$-\Delta_x G+\nabla_x p = \delta_{y} \mathbb{I}, \quad  \div_x G= 0, \text{ on } \: (0,+\infty)\times \R^2, \quad G(x,y)\vert_{x_1=0}=0$$ 
\begin{enumerate}
\item We have 
$$G(x,y)=\Phi(x-y)-\Phi(x-y^I)-A(x,y)$$
with 
$$A_{i,j} = (x_1 \partial_{x_i}-\delta_{i1})\left( \frac{1}{4\pi}\frac{(e_j^I)_1}{|x-y^I|}+\frac{y_1}{4\pi}   \frac{(x-y^I)\cdot e_j^I}{|x-y^I|^3} \right), \quad 1 \leq i,j\leq 3$$
where $\{e_1,e_1,e_3\}$ stands for the canonical basis of $\R^3$.
\item There exists a constant $C>0$ such that for all $x,y \in (0,+\infty)\times \R^2 $, $x\neq y$
$$
|G(x,y)| \leq \frac{C}{|x-y|}, \quad | \nabla_1 G(x,y)| + | \nabla_y G(x,y)| \leq \frac{C}{|x-y|^2}
$$
\end{enumerate}
\end{prop}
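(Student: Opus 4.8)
The plan is to treat the two assertions separately, the formula being essentially quoted from the literature and the decay estimates being the real content.

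For the explicit formula, I would rely on \cite[Section 3.1]{Gimbutas_Greengard_Veerapaneni_2015}, which records the classical image (Blake) representation of the half-space Stokeslet; for completeness one can also check it directly through three elementary observations. First, since $y^I\in(-\infty,0)\times\R^2$ lies outside $\overline{\Omega^0}$, the image term $\Phi(\cdot-y^I)$ and the correction $A(\cdot,y)$ are smooth on $\Omega^0$, so $-\Delta_x G+\nabla_x p=\delta_y\mathbb{I}$ holds there with the associated pressure $p$. Second, a short computation, using that each column of $A$ is obtained by applying $x_1\partial_{x_i}-\delta_{i1}$ to a pressure-type potential, gives $\operatorname{div}_x A(\cdot,y)=-\operatorname{div}_x\Phi(\cdot-y^I)$, hence $\operatorname{div}_x G=0$. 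Third, on $\{x_1=0\}$ one has $|x-y|=|x-y^I|$, the vector $x-y^I$ is the mirror image of $x-y$, and $x_1\partial_{x_i}-\delta_{i1}$ reduces to $-\delta_{i1}$; substitution then shows that $A(x,y)|_{x_1=0}$ cancels exactly the boundary trace of $\Phi(\cdot-y)-\Phi(\cdot-y^I)$, so that $G(x,y)|_{x_1=0}=0$. Since the rest of the paper uses this formula only through the decay bounds, I would keep this verification short and concentrate on the second assertion.

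For the estimates, the starting point is the geometric inequality: writing $x=(x_1,x')$, $y=(y_1,y')$ with $x_1,y_1>0$,
\[
|x-y^I|^2=(x_1+y_1)^2+|x'-y'|^2\geq(x_1-y_1)^2+|x'-y'|^2=|x-y|^2,
\]
together with $|x-y^I|\geq x_1+y_1\geq\max(x_1,y_1)$. Since the free-space Stokeslet satisfies $|\Phi(z)|\leq C|z|^{-1}$ and $|\nabla\Phi(z)|\leq C|z|^{-2}$, the contributions of $\Phi(x-y)$ and $\Phi(x-y^I)$ to $G$ and to its $x$- and $y$-derivatives (the latter via the chain rule for $y\mapsto y^I$) are immediately bounded by $C|x-y|^{-1}$, resp. $C|x-y|^{-2}$, using $|x-y^I|\geq|x-y|$. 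It remains to control $A(x,y)$ and its first derivatives. By uniqueness and scale invariance of the half-space Stokes problem one has $G(\lambda x,\lambda y)=\lambda^{-1}G(x,y)$, so $A$ is positively homogeneous of degree $-1$ under $(x,y)\mapsto(\lambda x,\lambda y)$ and $\nabla_x A,\nabla_y A$ of degree $-2$. Concretely, expanding $A_{ij}$ and applying the Leibniz rule to the prefactors $x_1,y_1$ (so $\partial_{x_k}(x_1 h)=\delta_{k1}h+x_1\partial_{x_k}h$, and similarly in $y_1$), one sees that $A$ and each of its first derivatives is a finite sum of monomials $x_1^{a}y_1^{b}\,N(x-y^I)/|x-y^I|^{d}$ with $a,b\in\{0,1\}$, $N$ a homogeneous polynomial of degree $c$, and total degree $a+b+c-d$ equal to $-1$ for $A$ and $-2$ after one differentiation. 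Bounding the numerators by $x_1\leq|x-y^I|$, $y_1\leq|x-y^I|$ and $|N(x-y^I)|\leq C|x-y^I|^{c}$, every such monomial is $\leq C|x-y^I|^{-1}$, resp. $C|x-y^I|^{-2}$, and the comparison $|x-y^I|\geq|x-y|$ concludes.

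The step I expect to require the most care — and the one I would write out in full — is exactly this bookkeeping for $A$: one must check that no differentiation produces a monomial carrying more powers of $x_1$ or $y_1$ in the numerator than the denominator can absorb. This is precisely what the structure $x_1\partial_{x_i}-\delta_{i1}$ of Blake's correction guarantees, since it keeps the degree in $x_1$ bounded by one at every stage. Once this is established, the bounds follow termwise with no cancellation needed; the longer algebraic identity underlying the direct verification of the formula is the other somewhat tedious point, but neither is conceptually difficult.
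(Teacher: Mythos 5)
Your handling of the decay estimates is essentially the paper's own argument: the same three geometric facts ($|x-y^I|\ge |x-y|$, $x_1\le |x-y^I|$, $y_1\le |x-y^I|$) combined with a termwise bound on $A$ and its first $x$- and $y$-derivatives, which is exactly the content of the two displayed bounds on $|A|$ and $|\nabla_x A|+|\nabla_y A|$ in the appendix; your homogeneity bookkeeping is just a systematic way of organizing those bounds, and it is correct. For the representation formula both you and the paper defer to \cite[Section 3.1]{Gimbutas_Greengard_Veerapaneni_2015}; the paper sketches the constructive derivation (image Stokeslet plus Papkovich--Neuber correction) where you propose a direct verification, which is a harmless variation.

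One concrete caveat on that direct verification, though: the image term must carry the \emph{reflected} force. The $j$-th column of the image part is $\Phi(x-y^I)e_j^I$, i.e.\ the matrix $\Phi(x-y^I)$ composed with the reflection $F\mapsto F^I$ (this is how the paper's sketch writes it, $u=\Phi(x-y)F-\Phi(x-y^I)F^I$, and how the displayed formula must be read, consistently with the appearance of $e_j^I$ in $A$). If you take the unreflected matrix literally, the boundary trace of $\Phi(x-y)-\Phi(x-y^I)$ has nonvanishing entries in the first column: for $i\ge 2$, at $x_1=0$ one gets $\frac{1}{4\pi}z_1 z_i/|z|^3$ with $z=x-y$, and these cannot be cancelled by $A$, since $A\vert_{x_1=0}=-\delta_{i1}\phi_j$ lives in the first row only; so the cancellation you assert would fail as stated. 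With the reflected force the tangential components of the image pair vanish identically on $\{x_1=0\}$ and only the normal component survives, which is precisely what the Papkovich--Neuber term $x_1\nabla\phi - e_1\phi$ removes. Relatedly, your identity $\div_x A=-\div_x\Phi(\cdot-y^I)$ holds only because both sides are zero: each column of $A$ is $x_1\nabla\phi_j-\phi_j e_1$ with $\phi_j$ harmonic, so $\div_x A=0$ outright, just as the Oseen columns are divergence free away from the singularity; no cancellation between the two terms is involved.
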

\begin{proof}
\begin{enumerate}
    \item The explicit formula of $G$ can be found in the nice paper \cite[Formula (21)]{Gimbutas_Greengard_Veerapaneni_2015}. Let us remind for completeness the main elements of its derivation. Let $F\in \R^3$, the idea is that subtracting to $\Phi(x-y)F$ its reflected point force $\Phi(x-y^I)F^I$ yields a velocity $u$
    $$
    u(x)=\Phi(x-y)F-\Phi(x-y^I)F^I$$
which satisfies $-\Delta u +\nabla p=F \delta_y$, $\div u=0$ on $(0,+\infty)\times\R^2$   with $u_3(x)_{|_{x_1=0}}=u_2(x)_{|_{x_1=0}}=0$ and
$$
u_1(x)_{|_{x_1=0}}=- \frac{1}{4\pi}\frac{F_1^I}{|x-y^I|}-\frac{y_1}{4\pi} \frac{F^I \cdot (x-y^I)}{|x-y^I|^3}
$$
At the right-hand side,  one recognizes the harmonic potential due to a simple charge and the one due to a  dipole with orientation $F^I$. 
Such an inhomogenous Dirichlet condition can be corrected by introducing the so called Papkovich-Neuber correction \cite[Definition 2.1]{Gimbutas_Greengard_Veerapaneni_2015}
    $$
    u^C(x)=x_1 \nabla \phi-e_1 \phi(x), \quad p^C(x)=2\partial_{x_1}\phi(x)
    $$
which satisfies $-\Delta u^C+\nabla p^C =0$, $\div u^C=0$ for any harmonic function $\phi$ and $u^C_{|_{x_1=0}}=\phi(x)_{|_{x_1=0}}e_1$. Hence in order to correct the boundary condition of $u$ one needs to add the term $u^C$,   
    with 
    $$
    \phi(\mathbf{x})=\frac{1}{4\pi}\frac{F_1^I}{|x-y^I|}+\frac{y_1}{4\pi} \frac{F^I \cdot (x-y^I)}{|x-y^I|^3}
    $$
    This yields the claimed formula. 
    \item     For the decay, we use the following properties for any $x, y \in (0,+\infty)\times \R^2 $, $x\neq y$ 
    $$|x-y^I|\geq |x-y|, \quad \frac{|y_1|}{|x-y^I|} \leq 1, \quad \frac{|x_1|}{|x-y^I|} \leq 1$$
    together with the following estimates 
$$
|A(x,y)| \leq C \left(\frac{|x_1|}{|x-y^I|}+ 1 \right) \left(\frac{1}{|x-y^I|}+ \frac{|y_1|}{|x-y^I|^2} \right)
$$
$$
|\nabla_x A(x,y)|+|\nabla_y A(x,y)| \leq C \left(\frac{|x_1|}{|x-y^I|}+ 1 \right) \left(\frac{1}{|x-y^I|^2}+ \frac{|y_1|}{|x-y^I|^3} \right)
$$
\end{enumerate}
\end{proof}

%\section{Proof of Lemma \ref{lemme:extension}}

% \begin{proof}[Proof of Lemma \ref{lemme:extension}]
% First let us consider the case where $\lambda=1$ and $a=0$. The idea is to apply Bogovskii operator in order to find $u_v \in H^1(B(0,\theta)\setminus B(0,1))$ such that 
% $ \div u_v = 0 $ on the annulus and $u_0=0$ on $\partial B(0,\theta)$ and $u_v=v$ on $\partial B(0,1)$ such that 
% $$
% \|u_v\|_{H^1(B(0,\theta)\setminus B(0,1))}\leq C_\theta \| v\|_{H^1(B(0,1))}\leq C_\theta \|\nabla v \|_{L^2(B(0,1))}
% $$
% where we used Poincaré inequality. The idea is then to apply a scaling argument and to notice that the constant is independent of $\lambda$
% \end{proof}
\subsection*{Acknowledgment}
The authors would like to thank Matthieu Hillairet and Richard H\"ofer for stimulating discussions on the topic. The work of D. G-V was supported by project Singflows grant ANR-18-CE-40-0027 and project Bourgeons grant ANR-23-CE40-0014-01 of the French National Research Agency (ANR).

\end{document}